\newtheorem{conjecture}{Conjecture}
\newtheorem{definition}{Definition}
\newtheorem{proposition}{Proposition}
\newtheorem{remark}{Remark}
\def \beq{ \begin{equation}}
\def \eeq{\end{equation}}
\newcommand{\e}{\epsilon}
\title{Three-body relative equilibria on $\mathbb{S}^2$\\
I: Euler configurations}
\date{}  
\begin{document}
	\maketitle
	\author{\begin{center}
	{ Toshiaki~Fujiwara$^1$, Ernesto P\'{e}rez-Chavela$^2$}\\	
		\bigskip
	   $^1$College of Liberal Arts and Sciences, Kitasato University,       Japan. fujiwara@kitasato-u.ac.jp\\
	    $^2$Department of Mathematics, ITAM, M\'exico. ernesto.perez@itam.mx\\
	\end{center}
	

\bigskip

\begin{flushright}
{\it To the memory of our friend Florin Diacu,\\
who was the inspiration of this work}
\end{flushright}

\begin{abstract}
Using the properties of the angular momentum, we develop a new geometrical technique to study relative equilibria for a system of $3$--bodies with positive masses, moving on the two sphere under the influence of an attractive potential depending only on the mutual distances among the bodies. With the above techniques we do an
analysis of the relative equilibria for the case of three bodies when they are moving on the same geodesic (Euler configurations). 
\end{abstract}

{\bf Keywords} Relative equilibria, Euler configurations, cotangent potential, condition for the shape.

\section{Introduction}
We are interested in the study of the simplest solutions of three point positive masses, moving on $\mathbb{S}^2$ under the influence of an attractive potential, which only depends on the mutual distances among the particles. 
These kind of solutions are called {\it relative equilibria}, from here on we denote them as $RE$. For the classical Newtonian $3$--body problem on the plane, it is well known that there are two classes of $RE$,
Eulerian and Lagrangian, the names are given by honouring to Leonard Euler and Joseph Louis Lagrange. Euler discovered the first class in 1767 \cite{Euler}, corresponding to collinear solutions. 
Few years later Lagrange, rediscovered them and found a new class of 
$RE$, which are obtained when we put three arbitrary masses at the vertices of an equilateral triangle, and particular choice of initial velocities are given \cite{Moeckel}. The initial position of the masses is called a {\it central configuration}, the respective solution for the three body problem ($RE$) is getting when we take a particular uniform rotation through the center of mass. In the rotating frame they become an equilibrium for the equations of motion, from here the name \cite{Wintner}. In this paper we study these kind of motions when the particles move on $\mathbb{S}^2$. Of course, in this case collinear motions mean that the three masses are on the same geodesic.

In the last years, Eulerian and Lagrangian $RE$ have been studied into the framework of the so called {\it the curved $3$--body problem}, that is, when the three masses are moving on a space of constant curvature. In dimension two, for positive curvature they are contained in or analysis. Usually, the corresponding authors consider the masses moving under the influence of the cotangent potential, which is a natural extension of the Newtonian potential to $\mathbb{S}^2$ (see \cite{Diacu-EPC1} and the references therein for more details). This is the reason why, we exemplify our new geometrical technique taking the cotangent potential as the main example in this article. In this case we can compare the results that we obtain (applying our technique), with the  results obtained by other authors using different approaches. 

More concretely, along the paper we will assume that three positive masses are moving on a two sphere of radius $R$ under the influence of the potential
\begin{equation}\label{genpot}
V = \sum_{i<j} m_im_j U(D^2_{ij}),
\end{equation}
where $D_{ij}$ is the chord distance 
between the particles with masses $m_i$ and $m_j$ respectively.
We exemplify our results on the two sphere of radius $R$ using the cotangent potential given by
\begin{equation}\label{partpot}
U =(1/R) \cot (\sigma),
\end{equation}
where $\sigma$
is equal to the angle between the two particles 
as seen from the origin of $\mathbb{S}^2$,
see for instance 
\cite{JA,Borisov1,Borisov2,Borisov3,Carinena,Diacu-EPC1,Diacu-EPC2,Diacu1,Diacu2,Diacu3,Diacu4,Kozlov,EPC2,tibboel} among others.
We observe that the potential \eqref{partpot} is included in the wide class of potentials \eqref{genpot}. 

There are many articles about relative equilibria on the sphere for the positive curved problem,  
most of them for the case of the Kepler and the $2$--body problem. 
We point out here that, contrary to what happens in the planar case, these problems are not equivalent, the first one in integrable whereas the second one is not
\cite{Shchepetilov2,Vozmischeva2}, the reason for this fact is that the center of mass is not more a first integral on curved spaces.
Day by day the literature on the subject is increasing, you can find papers analyzing Eulerian and Lagrangian motions on $\mathbb{S}^2$, and even polygonal $RE$ on $\mathbb{S}^2$, see for instance \cite{Diacu1,Borisov1,M-S,EPC1,tibboel,zhu} and the references therein, but just for the cotangent potential. Until we know, this is the first time that the problem is tackled in a general setting. 

Since the material getting by the analysis of the $RE$ on $\mathbb{S}^2$ is extensive, we have decided to split it into two natural parts. In the first one, we just analyze the collinear $RE$ of three bodies, called by short the Eulerian $RE$, in a forthcoming paper we will include the analysis for the $RE$ out of a geodesic or the Lagrangian $RE$. 

We must emphasize that for the Newtonian $n$--body problem, fixing the center of mass at the origin, we get that the linear momentum is a first integral.
For motions on the sphere we lost this first integral,
but using the conditions $c_x = c_y = 0$ getting from the angular momentum for $RE$ on $\mathbb{S}^2$ (see Section \ref{sec3}), we  determine the $z$--axis for the rotation axis. This is the essence of the new geometrical technique that we develop in this work, for the analysis of
the rigid motions on $\mathbb{S}^2$, when three particles of arbitrary masses are moving under the influence of a wide class of potentials given by \eqref{genpot}.
The introduction of this technique by first time, 
is undoubtedly the main contribution of this article. 

After the introduction, the paper is organized as follows: In Section \ref{prelims}
we get the equations of motion for three particles moving on $\mathbb{S}^2$ in spherical coordinates, we also obtain the equations for the angular momentum, which will play a main role for our geometric analysis of the $RE$.

 In Section \ref{sec3}, we give the definitions of relative equilibria and rigid rotator. In some sense a rigid rotator plays the same role as the central configurations do for the analysis of the Newtonian 
 $RE$ on the plane.  We also show that for the $RE$, two components of the angular momentum vanish, that is $c_x=0$ and $c_y=0$, this fact is of the utmost importance
in our approach, since these components give us two new integrals of motion that we will exploit along the manuscript. 

In Section \ref{sec4}, 
in order to have concrete results, we describe the relative equilibria on the equator for the cotangent potential \eqref{partpot}.
The equator is the special place where both the shape of the triangle formed by the three masses and the configuration can be described by the differences of the longitudes between the masses. This case is similar to the analysis of $RE$ on the Euclidean plane, so we do not need to apply our method, however in order to complete our analysis we have included it here. 
In this case the spherical angle $\theta_k = \pi/2$
and 
$\phi_k(t)=\phi_k(0)+\omega t$
for $k=1,2,3$.
When $\omega = 0$ we will show that it corresponds to a fixed point.

In order to get examples of $RE$ 
we have to impose a kind of triangle inequalities
for the mass ratio.
Studying the limit cases when
the inequality becomes an equality, we obtain an interesting result about some antipodal singularities, given when in the limit, two particles collide in antipodal position with the third particle. 

In Section \ref{sec5} appears the main ideas of the paper, here we study $RE$ on a rotating meridian for a generic potential. For a $RE$ we have to distinguish the shape variables and the configuration variables and the relationship between them
(the translation formulas). In this way, we first get the equations of motion for ``rigid rotators'' (the shape) on rotating meridians, and then using the translation formulas we obtain the respective $RE$. 

The map from the configuration to the shape is straightforward. Nevertheless the converse, that is, the mapping from shape to configuration requires some condition, which is provided by the angular momentum. Thereby we obtain the translation formulas.

 Using this approach we get the equilateral relative equilibria on a rotating meridian for a generic potential.
In some sense, this is 
similar to find first a central configuration and then the corresponding $RE$ for the Newtonian $n$--body problem on the plane. 

Sections \ref{sec6} and \ref{sec7}
are dedicated to the analysis of $RE$ on a rotating meridian for the cotangent potential. The results in these sections provide new families of Eulerian $RE$ for the positive curved three body problem.

In Section \ref{sec8}, since for the cotangent potential, the number of $RE$ 
depends of the mass ratio and the fixed angle between one pair of masses, we determine the number of $RE$ on a rotating meridian. 
We give examples with exactly two and six $RE$. We conjecture that for the cotangent potential, for any masses and  
fixed angle,
the maximum number of $RE$ is six.
Finally in Section \ref{sec9} we summarize our results and state some final remarks.


\section{Preliminaries and equations of motion}\label{prelims}
We utilize spherical coordinates to describe the three body problem on $\mathbb{S}^2$. First we introduce the notations that we will use along the paper and then we get the equations of motion.

\subsection{Notations}
The point $(X,Y,Z)$ on $\mathbb{S}^2$ with radius $R$ is represented by
the
spherical coordinates $(\theta,\phi)$,
\begin{equation*}
(X,Y,Z)=R(\sin\theta\cos\phi,\sin\theta\sin\phi,\cos\theta).
\end{equation*}
The chord length between the  points
$(X_i,Y_i,Z_i)$ and $(X_j,Y_j,Z_j)$ is given by
\begin{equation}
\label{defD}
\begin{split}
D_{ij}^2
&=(X_i-X_j)^2+(Y_i-Y_j)^2+(Z_i-Z_j)^2\\
&=2R^2\Big(1-\cos\theta_i\cos\theta_j-\sin\theta_i\sin\theta_j\cos(\phi_i-\phi_j)
		\Big).
\end{split}
\end{equation}
The minor arc length is the shorter length between the point 
$(X_i,Y_i,Z_i)$ and $(X_j,Y_j,Z_j)$ along the geodesic (the great circle)
connecting the two points.
The arc angle $\sigma_{ij}$ is the minor arc length divided by $R$
and is equal to the angle between the two points 
as seen from the origin of $\mathbb{S}^2$.
Therefore we have,
\begin{equation}
0\le \sigma_{ij}\le \pi.
\end{equation}

The relation between the chord length $D_{ij}$ and  the arc angle $\sigma_{ij}$ is given by
\begin{equation}
\label{relationDandSigma}
\e D_{ij}=\sin(\sigma_{ij}/2) \quad
\mbox{ with } \quad
\e =1/(2R).
\end{equation}

The equations (\ref{defD}) and (\ref{relationDandSigma}) yield
the fundamental relation
for the arc angle and the 
spherical coordinates.
\begin{equation}
\label{fundamentalrelation}
\cos\sigma_{ij}
=\cos\theta_i\cos\theta_j+\sin\theta_i\sin\theta_j\cos(\phi_i-\phi_j).
\end{equation}

\subsection{Equations of motion}
The Lagrangian for the three body problem on
$\mathbb{S}^2$
is given by
\begin{equation}
\label{theLagrangian}
\begin{split}
&L=K+V,\\
&K=R^2\sum_{k=1,2,3}\frac{m_k}{2}
		\left(\dot{\theta}_k^2+\sin^2(\theta)\dot{\phi}_k^2\right),\quad
V=\sum_{i<j}m_i m_j U(D_{ij}^2),\\
\end{split}
\end{equation}
where dot on symbols represents the time derivative.
Along the paper we will use the  notation for the derivative
$U'$ which is defined by
\begin{equation}
U'(D^2)=\frac{dU(D^2)}{d(D^2)}<0.
\end{equation}
The last inequality is the assumption we make to ensure that
the force between any two bodies is attractive.

When we need to specify a particular potential,
we use the cotangent potential and its derivative given by

\begin{equation}
U(D_{ij}^2)
=\frac{1-2\e^2D_{ij}^2}{\sqrt{D_{ij}^2(1-\e^2D_{ij}^2)}}
=\frac{\cos\sigma_{ij}}{R\sqrt{1-\cos^2(\sigma_{ij})}},
\end{equation}
\begin{equation}\label{deriv-pot}
U'(D^2)=-\frac{1}{2R^3\sin^3(\sigma)}.
\end{equation}

The equations of motion for 
$\phi_k$ and $\theta_k$ are
\begin{align}
p_{\phi_k}
&=\frac{\partial L}{\partial \dot{\phi}_k}
=R^2 m_k \sin^2(\theta_k)\, \dot{\phi}_k,\\
\frac{d}{dt}p_{\phi_k}
&=\frac{\partial L}{\partial \phi_k}
=2R^2\left(
\sum_{i\ne k}m_im_kU'(D_{ik}^2)
	\sin\theta_i\sin\theta_k\sin(\phi_k-\phi_i)
\right),
\end{align}
\begin{equation}
p_{\theta_k}=\frac{\partial L}{\partial \dot{\theta}_k}=R^2 m_k \dot{\theta}_k,\\
\end{equation}
and $dp_{\theta_k}/dt=\partial L/\partial \theta_k$ yields
\begin{equation}
\begin{split}
&R^2 m_k \ddot{\theta}_k
-R^2 m_k \sin(\theta_k)\cos(\theta_k)\dot{\phi}_k^2\\
&=2\sum_{i\ne k} m_i m_kU'(D_{ij}^2)
	\left(
	\sin\theta_k\cos\theta_i-\cos\theta_k\sin\theta_i\cos(\phi_i-\phi_k)
	\right).
\end{split}
\end{equation}

Since the Lagrangian is  invariant under the action of the group $SO(3)$,
the angular momentum 
\begin{equation}
c=(c_x, c_y, c_z)=\sum_k m_k (X_k,Y_k,Z_k)\times (\dot{X}_k, \dot{Y}_k, \dot{Z}_k)
\end{equation}
is a first integral.
The expression for each component is
\begin{align}
c_x&=R^2 \sum_k m_k \left(-\sin(\phi_k) \dot\theta_k
	-\sin(\theta_k)\cos(\theta_k)\cos(\phi_k) \dot\phi_k\right),
	\label{defCx}\\
c_y&=R^2 \sum_k m_k \left(\cos(\phi_k) \dot\theta_k
	-\sin(\theta_k)\cos(\theta_k)\sin(\phi_k) \dot\phi_k\right),
	\label{defCy}\\
c_z&=R^2 \sum_k m_k \sin^2(\theta_k) \dot \phi_k.
	\label{defCz}
\end{align}

It is well known, but worth mentioning, that the equations of motion and the conservation  of the angular momentum are independent of the choice of coordinates.
That is, if the three bodies satisfy the equations of motion for one 
spherical coordinate system, they will also satisfy the equations of motion for other
spherical coordinates system with different choices of the z-axis. The same is true for angular momentum.

\section{Relative equilibria and their properties}\label{sec3}
We start this section with
the definition of relative equilibrium.

\begin{definition}[Relative equilibrium]\label{def1}
A relative equilibrium on $\mathbb{S}^2$ is a solution 
of the equations of motion which satisfy
$\dot{\theta}_k=0$ and $\dot{\phi}_i-\dot{\phi}_j=0$
for all $k=1,2,3$ and all pair $(i,j)$
in a 
spherical coordinate system.
\end{definition}
This definition is coordinate independent.
The existence of such coordinate system is the condition for
the relative equilibrium.

We observe that by relation (\ref{fundamentalrelation}),
the arc angle $\sigma_{ij}$ between bodies $i$ and $j$
is constant.
Therefore the shape of the triangle with $m_k$ at the vertex is
independent of time.
We call it
a ``rigid rotator'', because it rotates 
as if it were a rigid body. 
In other words, a ``rigid rotator'' represents the shape defined by the three angles $\sigma_{ij}$.
The ``congruent triangles'' represent the same rigid rotator.
Fixing the $z$--axis, we can determine angles $\theta_k$,
and then we get the relative equilibrium (configuration).

By definition \ref{def1}, $\dot{\phi}_k=\omega$ are common for
$k=1,2,3$. Then
 by the conservation of $c_z$,
\begin{equation}
c_z=R^2\omega \sum_k m_k \sin^2(\theta_k)
	\label{Cz}
\end{equation}
$\omega$ is a constant.
Then, $\phi_k(t)=\phi_k(0)+\omega t$.

The other two components of the angular momentum which must be zero are
expressed by
\begin{align}
c_x&=-R^2 \omega \sum_k m_k \sin(\theta_k)\cos(\theta_k)\cos(\phi_k),	\label{Cx}\\
c_y&=-R^2 \omega \sum_k m_k 
	\sin(\theta_k)\cos(\theta_k)\sin(\phi_k).\label{Cy}
\end{align}
The reason is the following.
If $\omega\ne 0$, comparing the time $t=0$ and $t=\pi/\omega$,
we obtain $c_x(0)=-c_x(\pi/\omega)$.
The conservation of $c_x(t)$ yields $c_x=0$. The same is true for $c_y$.
If $\omega=0$, $c_x=c_y=0$ is obvious.
Therefore, the angular momentum for a relative equilibrium has the form
\begin{equation}
c=(0,0,c_z) \quad \mbox{ if } \quad \omega\ne 0.
\end{equation}
That is, the 
spherical coordinate system in the definition is one where the z-axis is the direction of the angular momentum vector.
Thus the relative equilibrium must satisfy

\begin{equation}
\label{CxCy3}
\sum_k m_k\sin(\theta_k)\cos(\theta_k) e_k=0 \quad 
\mbox{ if } \quad \omega\ne 0,
\end{equation}
for the two-dimensional unit vector
\begin{equation}
e_k = (\cos\phi_k,\sin\phi_k).
\end{equation}

The inner product of $e_k$ and the previous equation yields
\begin{eqnarray}
\label{cxcyInE3}
& \nonumber m_i\sin(\theta_i)\cos(\theta_i)\cos(\phi_k-\phi_i)
+ m_j\sin(\theta_j)\cos(\theta_j)\cos(\phi_k-\phi_j) \\
& + m_k\sin(\theta_k)\cos(\theta_k) =0,
\end{eqnarray}
for $(i,j,k)=(1,2,3),(2,3,1)$, and $(3,1,2)$.
From here on, we will use the cyclic notation 
when one equation contains three indexes $(i,j,k)$.
The outer product of $e_k$ and the equation \eqref{CxCy3} yields
\begin{equation}
\label{cxcyInE4}
m_i\sin(\theta_i)\cos(\theta_i)\sin(\phi_k-\phi_i)
+m_j\sin(\theta_j)\cos(\theta_j)\sin(\phi_k-\phi_j)
=0.
\end{equation}
We will utilise these expressions later.

Finally we can write the equations of motion for relative equilibria.
Since $p_{\phi_k}=R^2\omega m_k\sin^2(\theta_k)$ is constant,
the equations of motion for $\phi_k$ are reduced to
$\partial L/\partial \phi_k=0$, which are equivalent to
\begin{equation}
\label{relationsFromPhiII}
\begin{split}
&m_1 m_2 U'(D_{12}^2)\sin\theta_1 \sin\theta_2 \sin(\phi_1-\phi_2)\\
=&m_2 m_3 U'(D_{23}^2)\sin\theta_2 \sin\theta_3 \sin(\phi_2-\phi_3)\\
=&m_3 m_1 U'(D_{31}^2)\sin\theta_3 \sin\theta_1 \sin(\phi_3-\phi_1).
\end{split}
\end{equation}
Since $\ddot{\theta}_k=0$, 
the equations of motion for $\theta_k$ are
\begin{equation}
\label{eqForThetaII}
\begin{split}
-\omega^2 m_k &\sin\theta_k\cos\theta_k\\
=&2m_k m_iU'(D_{ki}^2)
	\Big(
		\sin\theta_k\cos\theta_i-\cos\theta_k\sin\theta_i\cos(\phi_k-\phi_i)
	\Big)\\
	&+2m_km_jU'(D_{kj}^2)
	\Big(
		\sin\theta_k\cos\theta_j-\cos\theta_k\sin\theta_j\cos(\phi_k-\phi_j)
	\Big).
\end{split}
\end{equation}

So far, the properties of relative equilibrium are clarified.
The axis of rotation (z-axis) along which the three bodies rotate is the direction of the angular momentum.
The equations of motion to be satisfied by the three bodies are now clear.
Before closing this section
we add a definition
and a comment.
\begin{definition}[Fixed point]
A fixed point is a relative equilibrium with $\omega=0$.
\end{definition}
Since there are no motion for a fixed point,
we can take any direction for the $z$-axis.
Therefore, only the shape defined by $\sigma_{ij}$
has meaning.

Comment for the orientation:
If one triangle with $\sigma_{ij}$ is a rigid rotator,
another triangle with the same $\sigma_{ij}$ 
and opposite orientation
is also a rigid rotator.
Because the map 
to the antipodal point of each body,
namely $\theta_k\to \pi-\theta_k$
and $\phi_k\to \phi_k+\pi$,}
maps the original triangle
 to the triangle with opposite orientation,
and the conditions (\ref{CxCy3}) and equations of motion
(\ref{relationsFromPhiII}--\ref{eqForThetaII})
are invariant for this map.


\section{Relative equilibria on the Equator for the cotangent potential}\label{sec4}
In this section, we will determine the relative equilibria on the equator
for the cotangent potential.

We take the $z$--axis as the line passing through
the North and South poles of $\mathbb{S}^2$.
Three bodies are on the Equator if $\theta_k=\pi/2$
for $k=1,2,3$.

The condition for $c_x=c_y=0$ in (\ref{Cx}--\ref{Cy}) and the equations of motion for $\theta_k$ (\ref{eqForThetaII})
are satisfied.
Now, since we know that for the cotangent potential $U'(D_{ij}^2)=-1/(2R^3 |\sin(\phi_i-\phi_j)|^3)$ (see equation \eqref{deriv-pot}). The remaining equations for $\phi_k$
in (\ref{relationsFromPhiII}) are given by
\begin{equation}
\label{eqForEquator}
\frac{m_1m_2\sin(\phi_1-\phi_2)}{|\sin(\phi_1-\phi_2)|^3}
=\frac{m_2m_3\sin(\phi_2-\phi_3)}{|\sin(\phi_2-\phi_3)|^3}
=\frac{m_3m_1\sin(\phi_3-\phi_1)}{|\sin(\phi_3-\phi_1)|^3}.
\end{equation}

Note that the angular velocity $\omega$ does not appear
anywhere.
This is because
the centrifugal force produced by the rotation around the z-axis
is cancelled by the constraint force
which maintains the bodies on the surface of $\mathbb{S}^2$.
Therefore a solution of equation (\ref{eqForEquator})
is the solution for any $\omega$, including $\omega=0$.
In the last case, relative equilibria on the Equator
are the fixed points.

We can assume without loss of generality that $\sin(\phi_1-\phi_2)>0$, then by equation (\ref{eqForEquator}),
$\sin(\phi_2-\phi_3)>0$ and $\sin(\phi_3-\phi_1)>0$
follows and 
 equation (\ref{eqForEquator}) is reduced to
\begin{equation}
\label{theSineTheorem}
\frac{\sin(\phi_1-\phi_2)}{\mu_3}
=\frac{\sin(\phi_2-\phi_3)}{\mu_1}
=\frac{\sin(\phi_3-\phi_1)}{\mu_2}
=\rho>0
\end{equation}
with
\begin{equation*}
\mu_k=\sqrt{m_im_j}.
\end{equation*}
We set the common value $\rho$.
The equation (\ref{theSineTheorem}) is just the sine theorem
for the triangle 
with perimeters length
$\mu_k$ and corresponding
outer angles  $\phi_i-\phi_j$.
See Figure \ref{figElementaryGeometry}.
\begin{figure}
   \centering
   \includegraphics[width=6cm]{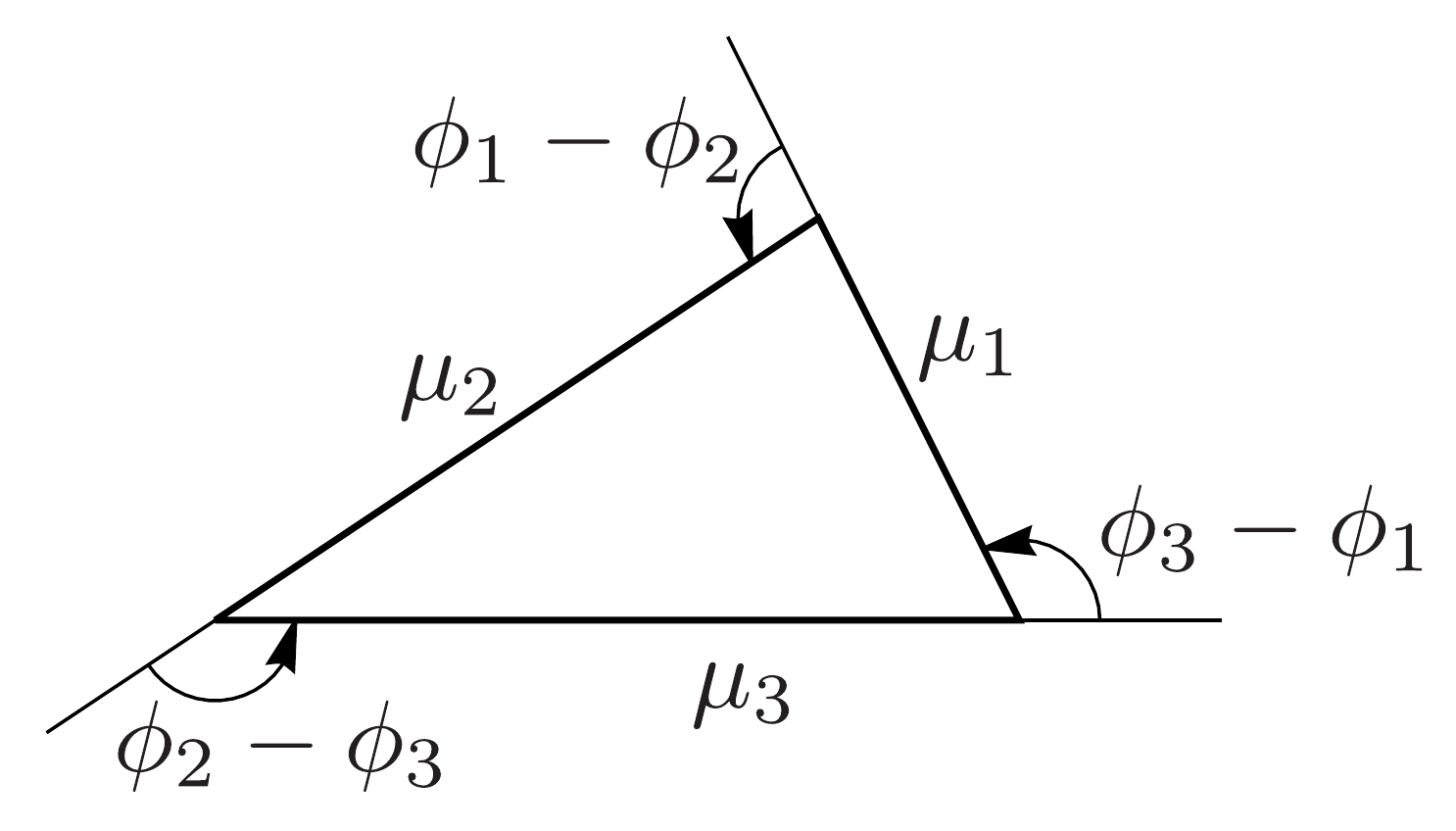} 
   \caption{The triangle with perimeters $\mu_k$
   and corresponding external angles $\phi_i-\phi_j$.}
   \label{figElementaryGeometry}
\end{figure}
Then, the cosine theorem yields
\begin{equation}
\label{theCosineTheorem}
\begin{split}
\cos(\phi_i-\phi_j)&=\frac{\mu_k^2-(\mu_i^2+\mu_j^2)}{2\mu_i\mu_j}.
\end{split}
\end{equation}
A direct algebraic derivation of this relation is given
in the Appendix \ref{directCosineTheorem}.
The common value $\rho$ is determined by
\begin{equation}
1
=\sin^2(\phi_3-\phi_1)+\cos^2(\phi_3-\phi_1)
=\rho^2\mu_2^2+\frac{\Big(\mu_2^2-(\mu_3^2+\mu_1^2)\Big)^2}{4\mu_3^2\mu_1^2},
\end{equation}
whose solution is
\begin{equation}
\label{solForK}
\rho
=\frac{\Big(
	(\mu_1+\mu_2+\mu_3)
	(\mu_1+\mu_2-\mu_3)
	(\mu_2+\mu_3-\mu_1)
	(\mu_3+\mu_1-\mu_2)
	\Big)^{1/2}}{2\mu_1\mu_2\mu_3}.
\end{equation}

The potential energy $-V$ for this equilibrium
is positive
\begin{equation}
\label{potentialEnergyForEquator}
\begin{split}
-V
&=-\sum_{ij}\frac{m_i m_j}{R\tan(\phi_i-\phi_j)}
=\frac{\sum_{ij}\mu_i^2\mu_j^2-\sum_k \mu_k^4}
	{Rk\mu_1\mu_2\mu_3}\\
%
&=\frac{\Big(
	(\mu_1+\mu_2+\mu_3)
	(\mu_1+\mu_2-\mu_3)
	(\mu_2+\mu_3-\mu_1)
	(\mu_3+\mu_1-\mu_2)
	\Big)^{1/2}}{R}\\
&>0.
\end{split}
\end{equation}

In order to get a triangle, the lengths 
$\mu_1,\mu_2,\mu_3$
 must satisfy the triangle inequalities,
\begin{equation*}
\mu_k < \mu_i+\mu_j.
\end{equation*}
These inequalities give a condition for the mass ratio to 
achieve a relative equilibrium (See Figure \ref{figSummaryOnEquator}).
\begin{equation*}
1 <\left(\frac{m_3}{m_1}\right)^{1/2}
+\left(\frac{m_3}{m_2}\right)^{1/2}\!\!\!,
1 <\left(\frac{m_1}{m_2}\right)^{1/2}
+\left(\frac{m_1}{m_3}\right)^{1/2}\!\!\!,
1 <\left(\frac{m_2}{m_3}\right)^{1/2}
+\left(\frac{m_2}{m_1}\right)^{1/2}\!\!\!.
\end{equation*}

\begin{figure}
   \centering
   \includegraphics[width=7cm]{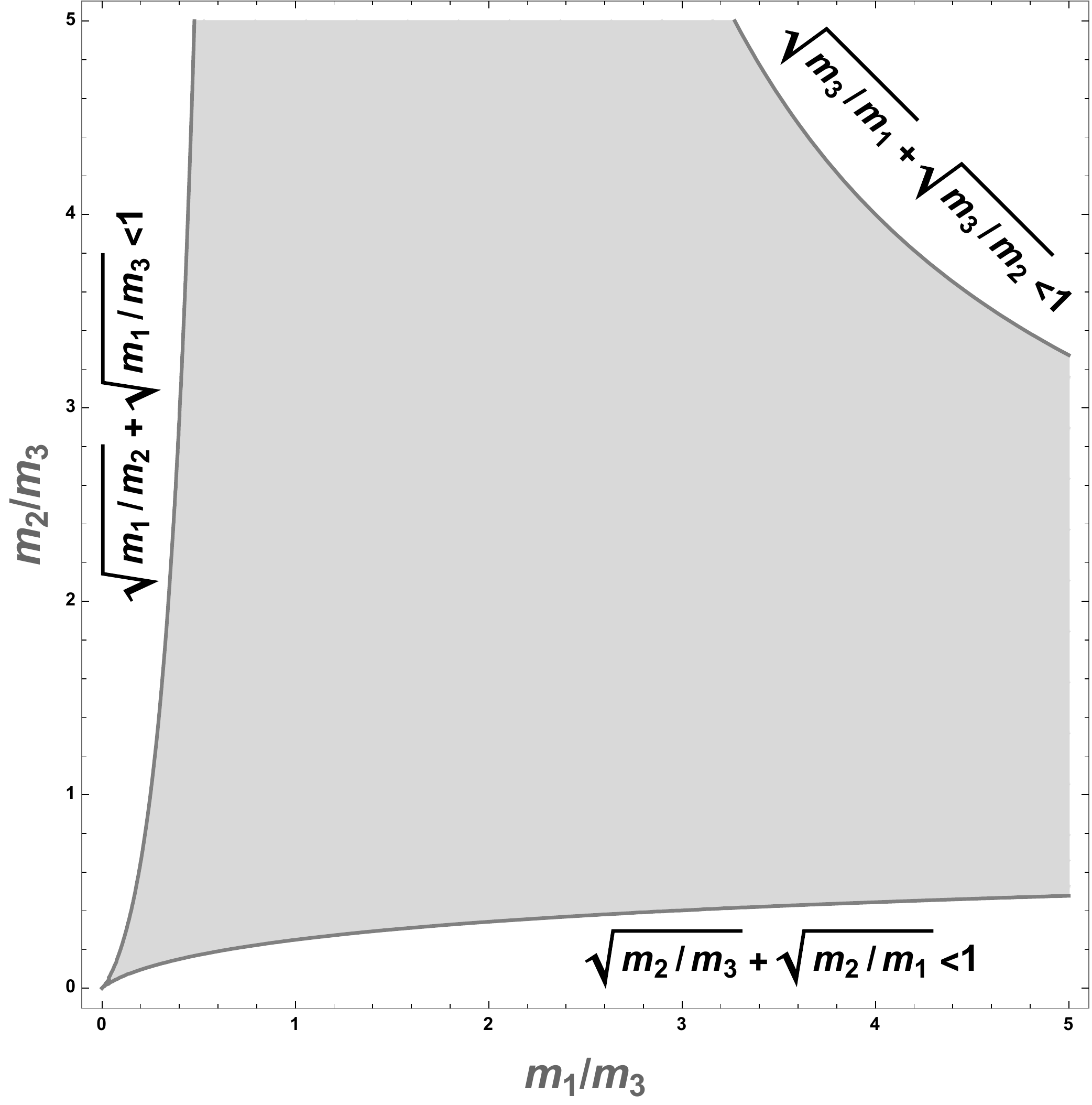} 
   \caption{The number of rigid rotator when three bodies are on the equator.
   The grey region has  $1$ solution.
   The three white regions 
   indicated by $(m_i/m_j)^{1/2}+(m_i/m_k)^{1/2}< 1$ do not have solutions.
   On the boundary between the grey and white regions given by
   $(m_i/m_j)^{1/2}+(m_i/m_k)^{1/2}= 1$,
   there are 
   no solutions. 
   }
   \label{figSummaryOnEquator}
\end{figure}
If the mass ratio satisfies these conditions,
the equations (\ref{theSineTheorem}),
(\ref{theCosineTheorem}) and
(\ref{solForK}) determine uniquely all $\phi_i-\phi_j$,
what is clearly shown in Figure \ref{figElementaryGeometry}.
In other words,
there is only one rigid rotator on the Equator
for given mass ratio.

The above inequalities bring us an interesting question.
What will happen if we take the limit
satisfying the equality?
For instance taking,
\begin{equation}
\label{forTheLimitingCase}
\left(\frac{m_3}{m_1}\right)^{1/2}
+\left(\frac{m_3}{m_2}\right)^{1/2}
\to 1.
\end{equation}
This corresponds to $\lim \mu_3 \to \mu_1+\mu_2$.
Then, $\phi_1-\phi_2\to0$,
$\phi_2-\phi_3\to \pi$ and $\phi_3-\phi_1\to \pi$.
Namely, both 
$m_1$ and $m_2$
approach to
the antipodal point of $m_3$.
However, the potential energy in (\ref{potentialEnergyForEquator})
 goes to zero.
Therefore, $m_1$ and $m_2$ can reach 
the antipodal point of $m_3$.
From the point of view of the equations of motion it is a singularity, but using the energy potential it is a normal point.
The derivative of the potential energy has the singularity. These kind of singularities are called {\it antipodal singularities}, they were first studied in 
\cite{Diacu-EPC2} for the three body problem on the sphere, where the particles move under the influence of the cotangent potential \eqref{partpot}. Moreover, with the above arguments it is possible to give a easiest proof of the main result in paper  \cite{Diacu-EPC2} in a 
general context.

\begin{remark}
Before to finish this section we would like to emphasizes that all the  above results are applicable when we have a repulsive force, in other words if we change the attractive potential $U = (1/R) \cot (\sigma)$
for the repulsive potential $U = - (1/R) \cot (\sigma)$, the same results hold.
\end{remark}


\section{Relative equilibria on a rotating meridian
for a generic potential}
\label{sec5}

In this section we develop a new method to study
relative equilibria on a rotating meridian
for a generic potential.

In order to analyze $RE$ on a rotating meridian,
it is convenient to set $\phi_k=\omega t$
for all $k=1,2,3$ and enlarge the range of $\theta_k$
to $-\pi\le \theta_k\le \pi$.

Since $\sin(\phi_i-\phi_j)=0$ for all pair $(i,j)$,
the equations of motion for $\phi_k$ are satisfied.
The condition for $c_x=c_y=0$ are simplified to
\begin{equation}
\label{CxCyForMeridian}
\sum_k m_k \sin(2\theta_k)=0.
\end{equation}
And the equations of motion for $\theta_k$ are
\begin{equation}
\label{eqThetaForMeridian0}
\begin{split}
\frac{\omega^2}{2}m_k\sin(2\theta_k)
&=-2\sum_{i\ne k}
	m_km_i\sin(\theta_k-\theta_i)U'(D_{ki}^2).\\
\end{split}
\end{equation}

The righthand side of the last equation contains only the difference
of $\theta_k$
and $\theta_i$, which describes the shape.
Therefore it is free from the choice of the $z$--axis.
On the other hand, the lefthand side depends on $\theta_k$,
which describes the configuration,
and depends on the choice of the $z$--axis.
The information for the
 translation formula between 
the shape variables $\theta_i-\theta_j$
and the configuration variables $\theta_k$
is contained in the condition (\ref{CxCyForMeridian}).

Now we will obtain the translation formula,
then we go back to solve equation (\ref{eqThetaForMeridian0}).

\subsection{Translation between 
the shape variables and the configuration variables}

In this subsection,
we develop the translation formula
between the shape variables $\theta_i-\theta_j$ 
and the configuration variable $\theta_k$.
The equation (\ref{CxCyForMeridian}) will play the key role
in the translation.
This subsection would be the most important contribution
to the theory of equilibria on rotating meridian.

Let $\theta_{21}=\theta_2-\theta_1$ and 
$\theta_{31}=\theta_3-\theta_1$.
Then the condition (\ref{CxCyForMeridian}) can be written as
\begin{equation}
\label{CxCyForTheta1}
\begin{split}
0
=&m_1\sin(2\theta_1)+m_2\sin(2\theta_1+2\theta_{21})
		+m_3\sin(2\theta_1+2\theta_{31})\\
=&\sin(2\theta_1)\Big(
		m_1+m_2\cos(2\theta_{21})
				+m_3\cos(2\theta_{31})
		\Big)\\
	&+\cos(2\theta_1)\Big(
		m_2\sin(2\theta_{21})
				+m_3\sin(2\theta_{31})
		\Big)\\
=&A\sin(2\theta_1+2\alpha),
\end{split}
\end{equation}
where
\begin{equation}
\label{defA}
\begin{split}
A
&=\left(\big(
		m_1+m_2\cos(2\theta_{21})
				+m_3\cos(2\theta_{31})
		\big)^2
	+\big(
		m_2\sin(2\theta_{21})
				+m_3\sin(2\theta_{31})
		\big)^2
\right)^{1/2}\\
&=
\left(
	\sum_k m_k^2
	+2\sum_{i<j} m_i m_j \cos\Big(2(\theta_j-\theta_i)\Big)
\right)^{1/2},
\end{split}
\end{equation}
and
\begin{equation}
\begin{split}
\cos(2\alpha)&=A^{-1}\left(
	m_1+m_2\cos\Big(2(\theta_2-\theta_1)\Big)
				+m_3\cos\Big(2(\theta_3-\theta_1)\Big)
	\right),\\
\sin(2\alpha)&=A^{-1}\left(
	m_2\sin\Big(2(\theta_2-\theta_1)\Big)
				+m_3\sin\Big(2(\theta_3-\theta_1)\Big)
	\right).
\end{split}
\end{equation}

In the following, we assume  
$A\ne 0$.
The case $A=0$ will be discussed in the last part
of this subsection.

The  solutions of (\ref{CxCyForTheta1})
are $\theta_1=-\alpha$, and  $\theta_1 = -\alpha+\pi/2$.
Namely, the solutions are
\begin{equation}
\begin{split}
\cos(2\theta_1)&=s A^{-1}\left(
	m_1+m_2\cos\Big(2(\theta_1-\theta_2)\Big)
				+m_3\cos\Big(2(\theta_1-\theta_3)\Big)
	\right),\\
\sin(2\theta_1)&=s A^{-1}\left(
	m_2\sin\Big(2(\theta_1-\theta_2)\Big)
				+m_3\sin\Big(2(\theta_1-\theta_3)\Big)
	\right).
\end{split}
\end{equation}
Where $s=\pm1$.
By introducing the complex number,
\begin{equation}
e^{2i\theta_1}
=sA^{-1}\left(
	m_1+m_2e^{2i(\theta_1-\theta_2)}
		+m_3e^{2i(\theta_1-\theta_3)}
	\right),
\end{equation}
the other angles are easily obtained,
\begin{equation}
\begin{split}
e^{2i\theta_2}
&=e^{2i\theta_1+2i(\theta_2-\theta_1)}
=sA^{-1}\left(
	m_1e^{2i(\theta_2-\theta_1)}
	+m_2
		+m_3e^{2i(\theta_2-\theta_3)}
	\right),\\
e^{2i\theta_3}
&=e^{2i\theta_1+2i(\theta_3-\theta_1)}
=sA^{-1}\left(
	m_1e^{2i(\theta_3-\theta_1)}
	+m_2 e^{2i(\theta_3-\theta_2)}
		+m_3
	\right).
\end{split}
\end{equation}

Thus, we obtain the translation formula
for $\sin(2\theta_k)$,
\begin{equation}
\label{sin2ThetaByDeltaTheta1}
\begin{split}
&\sin(2\theta_1)=sA^{-1}\Big(
				m_2\sin(2(\theta_1-\theta_2))
				+m_3\sin(2(\theta_1-\theta_3))
				\Big),\\
&\sin(2\theta_2)=sA^{-1}\Big(
				m_1\sin(2(\theta_2-\theta_1))
				+m_3\sin(2(\theta_2-\theta_3))
				\Big),\\
&\sin(2\theta_3)=sA^{-1}\Big(
				m_1\sin(2(\theta_3-\theta_1))
				+m_2\sin(2(\theta_3-\theta_2))
				\Big),
\end{split}
\end{equation}
and for $\cos(2\theta_k)$,
\begin{equation}
\label{cos2ThetaByDeltaTheta1}
\begin{split}
\cos(2\theta_1)
=s A^{-1}\left(
		m_1
		+m_2\cos\Big(2(\theta_1-\theta_2)\Big)
		+m_3\cos\Big(2(\theta_1-\theta_3)\Big)
		\right),\\
\cos(2\theta_2)
=s A^{-1}\left(
		m_1\cos\Big(2(\theta_2-\theta_1)\Big)
		+m_2
		+m_3\cos\Big(2(\theta_2-\theta_3)\Big)
		\right),\\
\cos(2\theta_3)
=s A^{-1}\left(
		m_1\cos\Big(2(\theta_3-\theta_1)\Big)
		+m_2\cos\Big(2(\theta_3-\theta_2)\Big)
		+m_3
		\right).
\end{split}
\end{equation}

With the above, we have obtained the translation formula between $\theta_k$ and 
$\theta_i-\theta_j$ if $A\ne 0$.

Now we consider the case $A=0$.
By equation (\ref{defA}),
$A=0$ is realized if and only if
\begin{equation}
\begin{split}
&m_1+m_2\cos(2(\theta_1-\theta_2))+m_3\cos(2(\theta_3-\theta_1))=0,\\
&m_2\sin(2(\theta_1-\theta_2))+m_3\sin(2(\theta_1-\theta_3))=0,
\end{split}
\end{equation}
whose solution is
\begin{equation}
\cos(2(\theta_i-\theta_j))
=\frac{m_k^2-(m_i^2+m_j^2)}{2m_im_j}.
\end{equation}
Actually, substituting the above expression into equation (\ref{defA})
yields $A=0$.
This can really happen.
A simple example is
$m_1=m_2=m_3$ and $\theta_i-\theta_j=\pm 2\pi/3$.
For these cases, by (\ref{CxCyForTheta1}),
the map $\theta_i-\theta_j$ to $\theta_k$ is
indefinite. 
Such situation corresponds to a fixed point.
It is exactly so, as we will see in
subsection (\ref{equilateralReOnMeridian}).

As we have shown above,
the translation from the shape variable $\theta_i-\theta_j$
to the configuration variable $\theta_k$ has two branches which
are represented by the value $s=\pm 1$.
The change $s \to -s$
corresponds to the change $\theta_k \to \theta_k+\pi/2$.
Namely, for the given shape and given mass,
there are two configurations,
which differ only by an overall angle of $90$ degrees.

The choice of $s$ is determined by the equations of motion.
Ahead in this paper we will describe this fact in detail.

Note that these formulas are derived only by the 
conditions $c_x=c_y=0$ with $\omega\ne 0$, therefore 
they only depend on the $SO(3)$ invariance of the system
and the definition of relative equilibria.

\subsection{Equations of motion for rigid rotators  on rotating meridian}

Using the translation between
$\theta_k$ and $\theta_i-\theta_j$
shown in the previous subsection,
the equations of motion (\ref{eqThetaForMeridian0}) for $A\ne 0$
can be written as
\begin{equation}
\label{eqThetaForMeridian}
\begin{split}
m_1m_2\left(s\frac{\omega^2}{2A}\sin\Big(2(\theta_1-\theta_2)\Big)
+2\sin(\theta_1-\theta_2)U'(D_{12}^2)
\right)\\
=m_2m_3\left(s\frac{\omega^2}{2A}\sin\Big(2(\theta_2-\theta_3)\Big)
+2\sin(\theta_2-\theta_3)U'(D_{23}^2)
\right)\\
=m_3m_1\left(s\frac{\omega^2}{2A}\sin\Big(2(\theta_3-\theta_1)\Big)
+2\sin(\theta_3-\theta_1)U'(D_{31}^2)
\right).
\end{split}
\end{equation}

These equations contain only the 
arc angle between the bodies.
Thus, we obtain the equations for rigid rotators
that are free from the choice of the $z$--axis.
Now the path that we will follow in or analysis is clear, we will use equation \eqref{eqThetaForMeridian} to find a rigid rotator, then we will apply the translation formulas \eqref{sin2ThetaByDeltaTheta1} and \eqref{cos2ThetaByDeltaTheta1} to obtain the corresponding relative equilibrium.

The role of the equation (\ref{eqThetaForMeridian})
will be more clear if we write them in the form,
\begin{equation}
\label{conditionForRigidRotatorI}
\begin{split}
s\frac{\omega^2}{2A}(G_{12}-G_{23})&=F_{12}-F_{23},\\
s\frac{\omega^2}{2A}(G_{23}-G_{31})&=F_{23}-F_{31},\\
s\frac{\omega^2}{2A}(G_{31}-G_{12})&=F_{31}-F_{12},
\end{split}
\end{equation}
where
\begin{equation}
\label{defOfFandG}
\begin{split}
F_{ij}&=-2m_im_j\sin(\theta_j-\theta_i)U'(D_{ij}^2),\\
G_{ij}&=m_im_j\sin\Big(2(\theta_j-\theta_i)\Big).
\end{split}
\end{equation}

Only two equations in (\ref{conditionForRigidRotatorI})
are independent. Without loss of generality 
we take the first and the last one.
There are four cases.

\begin{itemize}
\item[i)] Case 1.

For the case $(G_{12}-G_{23})(G_{31}-G_{12})\ne 0$.
Then the equations (\ref{conditionForRigidRotatorI}) is
equivalent to
\begin{equation}
\label{sAndOmega}
\begin{split}
\frac{F_{12}-F_{23}}{G_{12}-G_{23}}
=\frac{F_{31}-F_{12}}{G_{31}-G_{12}}
=s\frac{\omega^2}{2A}.
\end{split}
\end{equation}
The left hand side of this equality determines the rigid rotator, and the right hand side determines 
$s$ and $\omega^2$.

\item[ii)] Case 2.

For the case $(G_{12}-G_{23})=0$ and $(G_{31}-G_{12})\ne 0$,
the condition for a rigid rotator is $F_{12}-F_{23}=0$,
and $s$ and $\omega^2$ are determined by
\begin{equation}
\label{exceptionalCaseI}
\frac{F_{31}-F_{12}}{G_{31}-G_{12}}
=s\frac{\omega^2}{2A}.
\end{equation}

\item[iii)] Case 3.

Similarly,
the case $(G_{12}-G_{23})\ne 0$ and $(G_{31}-G_{12})= 0$,
the condition for a rigid rotator is $F_{31}-F_{12}=0$
and $s$ and $\omega^2$ are determined by
\begin{equation}
\label{exceptionalCaseII}
\frac{F_{12}-F_{23}}{G_{12}-G_{23}}
=s\frac{\omega^2}{2A}.
\end{equation}

\item[iv)] Case 4.

For the case $G_{12}=G_{23}=G_{31}$,
the condition for the rigid rotator is
\begin{equation}
\label{exceptionalCaseIII}
F_{12}=F_{23}=F_{31}.
\end{equation}
For this case, $s$ and $\omega$ are not determined.
Any $s$ and $\omega$ satisfies the equation of motion.
Therefore, it corresponds to a fixed point.

\end{itemize}

The details of the exceptional cases in 
(\ref{exceptionalCaseI}--\ref{exceptionalCaseIII})
for the cotangent potential
are shown in the appendix \ref{exceptionalCases}.

If a shape $\theta_i-\theta_j$ satisfies 
one of the conditions
(\ref{sAndOmega}),
 (\ref{exceptionalCaseI}), (\ref{exceptionalCaseII}) or
(\ref{exceptionalCaseIII})
then the shape is a rigid rotator.
If you change the potential $U$ (attractive) to $-U$ (repulsive),
the $F_{ij}-F_{jk}$ change their  sign,
and the equalities are satisfied by simply changing 
 $s \to -s$.
With the above we have proved the following proposition.
\begin{proposition}
If a shape for the
attractive potential $U$ is a rigid rotator, then
the same shape is a rigid rotator
for the repulsive potential $-U$.
The corresponding two configurations
differ only by an overall angle of $90$ degrees,
if not, it is a fixed point.
\end{proposition}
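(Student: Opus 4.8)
The plan is to exploit the clean separation, visible in \eqref{defOfFandG}, between the purely shape-dependent quantities $G_{ij}$ and the potential-dependent quantities $F_{ij}$. The first point to record is that $G_{ij}=m_im_j\sin(2(\theta_j-\theta_i))$ involves only the masses and the shape differences $\theta_j-\theta_i$, hence is untouched when $U$ is replaced by $-U$; the same holds for the amplitude $A$ of \eqref{defA}, which again depends only on masses and shape. By contrast $F_{ij}=-2m_im_j\sin(\theta_j-\theta_i)U'(D_{ij}^2)$ carries $U'$ as a linear factor, so the reversal $U\to -U$ (equivalently $U'\to -U'$) produces exactly $F_{ij}\to -F_{ij}$.

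With this in hand I would run through the four cases \eqref{sAndOmega}--\eqref{exceptionalCaseIII} and verify that the rigid-rotator condition on the shape is invariant. In Case~1 the shape condition is the single equality $(F_{12}-F_{23})(G_{31}-G_{12})=(F_{31}-F_{12})(G_{12}-G_{23})$; sending every $F_{ij}$ to $-F_{ij}$ multiplies both sides by $-1$, so the equality persists. The exceptional Cases~2 and~3 impose $F_{12}-F_{23}=0$ and $F_{31}-F_{12}=0$ respectively, conditions plainly preserved by $F\to -F$, and Case~4 requires $F_{12}=F_{23}=F_{31}$, which under $F\to -F$ becomes $-F_{12}=-F_{23}=-F_{31}$, again equivalent. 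Thus in every case a shape that is a rigid rotator for $U$ remains one for $-U$, establishing the first assertion.

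For the configuration statement I would track the branch selector $s$ through the right-hand sides $s\frac{\omega^2}{2A}$ in \eqref{sAndOmega}, \eqref{exceptionalCaseI} and \eqref{exceptionalCaseII}. For a genuine relative equilibrium, $\omega\ne 0$, so this quantity equals the (nonzero) common ratio of $F$-differences to $G$-differences; reversing the sign of all $F_{ij}$ flips the sign of that ratio, and since $\omega^2>0$ and $A>0$ are fixed by the shape and masses alone, the only quantity left to absorb the sign change is $s$, forcing $s\to -s$. Invoking the fact established above that the branch change $s\to -s$ corresponds precisely to $\theta_k\to\theta_k+\pi/2$ for all $k$, the two configurations attached to $U$ and to $-U$ differ by a uniform rotation of $90$ degrees. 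The one exception is Case~4, where by \eqref{exceptionalCaseIII} neither $s$ nor $\omega$ is pinned down by the equations of motion; this is exactly the fixed-point situation, and it supplies the clause ``if not, it is a fixed point.''

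The step I expect to demand the most care is the bookkeeping in the degenerate regime rather than any of the algebra above. Concretely, one must confirm that the only way the $90$-degree conclusion can fail is through the loss of determinacy of $s$, and that this loss happens precisely on the fixed-point locus, namely Case~4 together with the $A=0$ situation of the translation subsection where the map from shape to configuration is already indefinite. Once it is checked that these degenerate configurations are genuinely fixed points and not counterexamples, the dichotomy in the statement is exactly accounted for, and the remaining verifications are the routine sign manipulations indicated.
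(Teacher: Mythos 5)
Your proposal is correct and follows essentially the same route as the paper: the replacement $U\to -U$ flips the sign of every $F_{ij}$ while leaving the shape-only quantities $G_{ij}$ and $A$ unchanged, so the rigid-rotator conditions in each of the four cases persist and the sign change is absorbed by $s\to -s$, which is exactly the $90$-degree shift $\theta_k\to\theta_k+\pi/2$, with Case~4 giving the fixed point. Your write-up is merely a more detailed bookkeeping of the paper's two-sentence argument, including the degenerate loci, and nothing in it is at odds with the paper.
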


\subsection{
Equilateral relative equilibrium
on a rotating meridian for a generic potential}
\label{equilateralReOnMeridian}
The equations (\ref{eqThetaForMeridian}) have the following
simple solution. (This result was proved by the cotangent potential in \cite{zhu}).

\begin{proposition}
\label{equilateralRigidRotator}
The equilateral triangle
$\theta_1-\theta_2=\theta_2-\theta_3=\theta_3-\theta_1
=2\pi/3$ is
a rigid rotator on a rotating meridian
for any masses
and for the generic potential $U(D^2)$.
\end{proposition}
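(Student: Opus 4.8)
The plan is to verify the rigid-rotator conditions \eqref{conditionForRigidRotatorI} directly for the equilateral shape, exploiting the fact that on a rotating meridian all three bodies share the same longitude $\phi_k=\omega t$. First I would record that, because $\phi_i-\phi_j=0$, the fundamental relation \eqref{fundamentalrelation} collapses to $\cos\sigma_{ij}=\cos(\theta_i-\theta_j)$, so the arc angle between any pair equals $|\theta_i-\theta_j|$. For the equilateral shape $\theta_i-\theta_j=\pm 2\pi/3$ this gives $\sigma_{ij}=2\pi/3$ for every pair; hence all three chord lengths $D_{ij}$ coincide and the derivative $U'(D_{ij}^2)=U'(D^2)$ takes a single common value, independent of the pair. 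This is the observation that makes the generic-potential case no harder than any specific one.

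The key step is then a proportionality between the quantities $F_{ij}$ and $G_{ij}$ defined in \eqref{defOfFandG}. Writing $G_{ij}=2m_im_j\sin(\theta_j-\theta_i)\cos(\theta_j-\theta_i)$ and comparing with $F_{ij}=-2m_im_j\sin(\theta_j-\theta_i)U'(D^2_{ij})$, one sees that whenever $\sin(\theta_j-\theta_i)\neq 0$,
\[
\frac{F_{ij}}{G_{ij}}=\frac{-U'(D^2_{ij})}{\cos(\theta_j-\theta_i)}.
\]
Since $\theta_j-\theta_i=\pm 2\pi/3$ gives $\cos(\theta_j-\theta_i)=-\frac{1}{2}$ for every pair, and $U'(D^2_{ij})$ is the common value $U'(D^2)$, this ratio equals the single constant $2U'(D^2)$ for all three pairs. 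In other words $F_{ij}=2U'(D^2)\,G_{ij}$ with a pair-independent factor.

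With this identity the conditions \eqref{conditionForRigidRotatorI} become $s\frac{\omega^2}{2A}(G_{ij}-G_{jk})=2U'(D^2)(G_{ij}-G_{jk})$ for each cyclic pair, which are satisfied by taking $s\frac{\omega^2}{2A}=2U'(D^2)$; since $A>0$ and $U'(D^2)<0$, this forces $s=-1$ and $\omega^2=-4A\,U'(D^2)>0$, a genuine relative equilibrium. Equivalently, each bracket in \eqref{eqThetaForMeridian} reduces to $F_{ij}-s\frac{\omega^2}{2A}G_{ij}=\big(2U'(D^2)-s\frac{\omega^2}{2A}\big)G_{ij}=0$, so the three-way equality holds trivially. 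This disposes of Cases 1--3 of the preceding case analysis in a single stroke, because $F_{ij}-F_{jk}=2U'(D^2)(G_{ij}-G_{jk})$ vanishes exactly when the corresponding $G$-difference does.

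The one point requiring care --- and what I expect to be the only real subtlety --- is the degenerate situation. Computing $A$ from \eqref{defA} at the equilateral shape gives $A^2=\sum_k m_k^2-\sum_{i<j}m_im_j=\frac{1}{2}\sum_{i<j}(m_i-m_j)^2$, so $A=0$ precisely when $m_1=m_2=m_3$. In that case all the $G_{ij}$, and hence all the $F_{ij}$, coincide, the translation from shape to configuration is indefinite, and one lands in Case 4 / condition \eqref{exceptionalCaseIII}: the equilateral shape is then a fixed point, which is still a rigid rotator by definition. Thus for every choice of masses the equilateral triangle satisfies the rigid-rotator equations --- with a determined $\omega$ when the masses are not all equal, and as a fixed point when they are --- which completes the proof.
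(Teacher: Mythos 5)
Your proof is correct and follows essentially the same route as the paper: both rest on the observation that the equilateral shape makes $U'(D_{ij}^2)$ pair-independent, then choose $s=-1$ and $\omega^2=-4AU'$ so that each line of the rigid-rotator equations vanishes, and finally treat the equal-mass case $A=0$ separately as a fixed point. Your packaging via the proportionality $F_{ij}=2U'(D^2)\,G_{ij}$ is a slightly tidier way of saying what the paper does by direct substitution into \eqref{eqThetaForMeridian}, and your Case 4 condition $F_{12}=F_{23}=F_{31}$ is exactly the paper's verification of \eqref{eqThetaForMeridian0} with $\omega=0$.
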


\begin{proof}
For $\theta_1-\theta_2=\theta_2-\theta_3=\theta_3-\theta_1
=2\pi/3$,
$D_{ij}=\sqrt{3}\, R$ for all $i,j$.
Therefore, $U'(D_{ij}^2)$ are common for all $i,j$.
Then the equations (\ref{eqThetaForMeridian}) are
\begin{equation}
\label{eqForEquilateralSolution}
\sqrt{3}\,\,m_im_j
\left(-s \frac{\omega^2}{4A}
	+U'
\right)
=
\mbox{ common  for all }
(i,j).
\end{equation}
Therefore, if
\begin{equation}
\begin{split}
s=-1,\quad
\omega^2=-4AU'
\end{split}
\end{equation}
the lefthand side of (\ref{eqForEquilateralSolution})
has a common value zero for all $(i,j)$.
Therefore, this shape is a rigid rotator.

The value of $A$ is given by
\begin{equation}
A
=\left(\sum m_k^2-\sum_{i<j} m_i m_j\right)^{1/2}
=\left(\frac{1}{2}\sum_{i<j} (m_i- m_j)^2\right)^{1/2}.
\end{equation}
Since $A\ne 0$ for non-equal masses case,
each value $\theta_k$ is determined by
the equations (\ref{sin2ThetaByDeltaTheta1}--\ref{cos2ThetaByDeltaTheta1}).
\begin{equation}
\label{cofigForEquilateralTriangle}
\begin{split}
\sin(2\theta_k)&=\frac{\sqrt{3}}{2A}(m_i-m_j),\\
\cos(2\theta_k)&=\frac{1}{A}\left(
					m_k-\frac{1}{2}(m_i+m_j)
				\right).
\end{split}
\end{equation}
We can check that they satisfy the original equations of motion
(\ref{eqThetaForMeridian0}).

If $m_1=m_2=m_3$ then $A=0$.
Therefore equations (\ref{eqThetaForMeridian})
and (\ref{sin2ThetaByDeltaTheta1}--\ref{cos2ThetaByDeltaTheta1})
are invalid.
However, the original
equations
of motion 
(\ref{eqThetaForMeridian0})
are satisfied by $\omega=0$ and
$\theta_1-\theta_2=\theta_2-\theta_3=\theta_3-\theta_1
=2\pi/3$.
This is a fixed point,
therefore individual value $\theta_k$ has no sense,
only the distance $\theta_i-\theta_j$ has meaning.

\end{proof}


\section{Relative equilibria on a rotating meridian
for the cotangent potential}
\label{sec6}
In this section we treat
relative equilibria on a rotating meridian
for the cotangent potential (\ref{partpot}).

By using \eqref{sAndOmega},
we can find a equation for $x=\theta_3-\theta_1$
for given masses and given $a=\theta_2-\theta_1$.
Then 
$\theta_3-\theta_2=x-a$. 

Here, we consider only the case in \eqref{sAndOmega}.
The exceptional cases in (\ref{exceptionalCaseI}--\ref{exceptionalCaseIII}) for the cotangent potential
are explicitly shown in the appendix \ref{exceptionalCases}.
These exceptional cases will be considered
when the value $a$ and the mass ratio are fixed.

Namely, in this section we assume,
\begin{equation}
\begin{split}
&(G_{12}-G_{23})(G_{31}-G_{12})\\
&=-m_1m_2m_3^2
\Big(\nu_1\sin(2a)-\sin(2(x-a))\Big)
\Big(\nu_2\sin(2a)+\sin(2x)\Big)\\
&\ne 0.
\end{split}
\end{equation}
Where,
\begin{equation}
\nu_1=\frac{m_1}{m_3},\qquad
\nu_2=\frac{m_2}{m_3}.
\end{equation}

It is not difficult to verify that the potential has four singular points
$\theta_3=\theta_1, \theta_2, \theta_1+\pi,$ and 
$\theta_2+\pi$.

Without loss of generality
we can assume $0<a<\pi$.
That is, $\theta_1<\theta_2<\theta_1+\pi<\theta_2+\pi$.
 
We name the regions I, II, III, and IV where the potential is regular
as follows,
\begin{equation}
\begin{cases}
\textrm{I:}&\theta_1<\theta_3<\theta_2,\\
\textrm{II:}&\theta_2<\theta_3<\theta_1+\pi,\\
\textrm{III:}&\theta_1+\pi<\theta_3<\theta_2+\pi,\\
\textrm{IV:}&\theta_2+\pi<\theta_3<\theta_1+2\pi.
\end{cases}
\end{equation}
See Table \ref{tableForTheSignv3},
where the sign of $\sin(x)$ and $\sin(x-a)$ are shown.
The sign of $\sin(a)$ is positive by the assumption.
It is convenient to use the sign function defined by
\begin{equation}
|\sin(x)|=\alpha \sin(x),\qquad
|\sin(x-a)|=\beta \sin(x-a).
\end{equation}
The signs of $\alpha$ and $\beta$ are also shown
in Table \ref{tableForTheSignv3}.

\begin{table}
  \centering 
  \caption{Sign of $\sin(x)$ and $\sin(x-a)$
  and their sign function $\alpha$, $\beta$
  defined by $|\sin(x)|=\alpha \sin(x)$
  and $|\sin(x-a)|=\beta \sin(x-a)$.}
  \label{tableForTheSignv3}
\begin{tabular}{c|c|c|c|c|c|c|c|c}
\hline
   $\theta_3$&$\theta_1$&I&$\theta_2$&II&$\theta_1+\pi$&III&$\theta_2+\pi$&IV\\
   \hline
$x=\theta_3-\theta_1$&$0$&&$\theta_2-\theta_1$&&$\pi$&&$\theta_2-\theta_1+\pi$&\\
$\sin(x)$&$0$&$+$&$+$&$+$&$0$&$-$&$-$&$-$\\
$\alpha$&&$1$&$1$&$1$&&$-1$&$-1$&$-1$\\
\hline
$\theta_3-\theta_2$&$\theta_1-\theta_2$&&$0$&&$\theta_1-\theta_2+\pi$&&$\pi$&\\
$\sin(x-a)$&$-$&$-$&$0$&$+$&$+$&$+$&$0$&$-$\\
$\beta$&$-1$&$-1$&&$1$&$1$&$1$&&$-1$\\
\hline
\end{tabular}
\end{table}

The equation 
\begin{equation}
\label{eqForGivenTheta21}
\frac{F_{12}-F_{23}}{G_{12}-G_{23}}
=\frac{F_{31}-F_{12}}{G_{31}-G_{12}}
\end{equation}
for cotangent potential is
\begin{equation*}
\frac{\displaystyle
	\frac{m_1m_2}{\sin^2(a)}-\frac{m_2m_3}{\beta\sin^2(x-a)}
}{m_1m_2\sin(2a)-m_2m_3\sin(2(x-a))}
=
\frac{\displaystyle
	\frac{m_3m_1}{\alpha\sin^2(x)}+\frac{m_1m_2}{\sin^2(a)}
}{m_3m_1\sin(2x)+m_1m_2\sin(2a)},
\end{equation*}
which can be reduced to
\begin{equation}
\label{defOfG}
f=\frac{g}{
	\sin^2(x)\sin^2(x-a)} = 0,
\end{equation}
where
\begin{equation}
\begin{split}
g
=&\alpha\beta\sin^2(x)\sin^2(x-a)
	\Big(\nu_1\sin(2x)+\nu_2\sin(2(x-a))\Big)\\
&-\sin^2(a)\Big(\alpha\sin^2(x)\sin(2x)
			-\beta\sin^2(x-a)\sin(2(x-a))\Big)\\
&-\sin^2(a)\sin(2a)\Big(
		\nu_2\alpha\sin^2(x)+\nu_1\beta\sin^2(x-a)
	\Big).
\end{split}
\end{equation}

Note that
$g$ is a homogeneous function of $\sin$ 
of degree $5$.
Indeed, consider the limit $R\to \infty$
with the arc length $r_{31}=xR$, $r_{21}=aR$ finite.
Then, the lowest order term of $g$ is $O(1/R^5)$.
Let us expand $g$ in the region II, that is $\alpha=\beta=1$
and let 
$r_{31}=(1+\lambda) r_{21}$
we obtain,
\begin{equation}
\label{RtoInfLimit}
\begin{split}
g=(r_{21}/R)^5
	\Big(
	&(m_1+m_2)\lambda^5
	+(3m_1+2m_2)\lambda^4
	+(3m_1+m_2)\lambda^3\\
	&-(m_2+3m_3)\lambda^2
	-(2m_2+3m_3)\lambda
	-(m_2+m_3)
	\Big)\\
&+O((r_{21}/R)^7).
\end{split}
\end{equation}
The above fifth degree polynomial in $\lambda$, is the Euler polynomial found for this author in 1767 as it should be (you can find it in the original Euler's paper \cite{Euler}, and in \cite{Hestenes} page 403,
 for a modern lecture we recommend \cite{Moeckel}).
In other words,
the lowest order term of $g=0$ in the limit $R\to \infty$ is the same as the Euler polynomial for the Euclidean plane. The above result is close related with the result in Bengochea et al. \cite{Bengochea}, where the authors found the way to continue relative equilibria in the Newtonian $n$--body problem to spaces of constant curvature.

Note that the rigid rotators are given by the zeros
of $f$, not of $g$.
For example, for the case $a=\phi_2-\phi_1=\pi/2$,
$g$ has the factor $\sin(2x)$,
\begin{equation}
g=
\frac{\sin(2x)}{8}
\Big(\alpha\beta(\nu_1-\nu_2)(1-\cos(4x))
	+4(\alpha-\beta)\cos(2x)
	-4(\alpha+\beta)
\Big).
\end{equation}
Therefore, $g$ has zero points at $x=n\pi/2$, $n=0,1, 2,3$.
But the denominator  is 
$\sin^2(2x)/4.$
Therefore, the equation (\ref{defOfG}) for this case is
\begin{equation}
\label{eqForTheta21EqPiDiv2}
f= \frac{1}{2\sin(2x)}
\Big(\alpha\beta(\nu_1-\nu_2)(1-\cos(4x))
	+4(\alpha-\beta)\cos(2x)
	-4(\alpha+\beta)
\Big)=0.
\end{equation}
Thus,
zeros of $\sin(2x)$ are not the zeros of $f$,
while the zeros in the parentheses of the above equation give us the rigid rotators.
We will count the number of rigid rotators for the cotangent potential \eqref{partpot} in the following section.


\section{Some equilibria on a rotating meridian for the cotangent potential}\label{sec7} In this section, we apply our general results to the cotangent potential \eqref{partpot} to show
some interesting relative equilibria on rotating meridian.

\subsection{Rigid rotator with $\theta_2-\theta_1=\pi/2$}
\label{secReForThetaEqPiDiv2}
For $a=\theta_2-\theta_1=\pi/2$, $x=\theta_3-\theta_1$,
the condition for the exceptional cases
$(G_{12}-G_{23})(G_{31}-G_{12})= 0$
yields $\sin(2x)^2=0$.
The solutions $x=0,\pi/2,\pi,3\pi/2$,
correspond to the places for antipodal points of $m_1$ or $m_2$.
Thus, these values of  $x$ do not correspond to a rigid rotator.
Therefore, only the equation (\ref{eqForTheta21EqPiDiv2})
gives rigid rotators.

Let $h(x)$ be the expression between parentheses in 
(\ref{eqForTheta21EqPiDiv2}),
then, as we have mentioned before, the equation for a rigid rotator is
\begin{equation}
\label{eqForTheta21EqPiDiv2H}
h(x)=\alpha\beta(\nu_1-\nu_2)(1-\cos(4x))
	+4(\alpha-\beta)\cos(2x)
	-4(\alpha+\beta)=0.
\end{equation}

Now we are in conditions to count the number of solutions for $\theta_2-\theta_1=\pi/2$.
\begin{itemize}
\item[i)] For the region I:
$0<x<\pi/2$, $\alpha=1$, $\beta=-1$.
\begin{equation}
\begin{split}
h(x)
&=2\Big(4\cos(2x)-(\nu_1-\nu_2)\sin^2(2x)
	\Big).
\end{split}
\end{equation}
If $\nu_1=\nu_2$, then $h(x)=8\cos(2x)=0$ 
has just one solution $x=\pi/4$ in this region.
If $\nu_1 \ne \nu_2$, then
$h(x)=0$ is equivalent to
\begin{equation}
\sin(2x)\tan(2x)=\frac{4}{\nu_1-\nu_2}.
\end{equation}
Since the left hand side in the last equation takes any value in 
$(-\infty, \infty)$
just once for $0<x<\pi/2$,
$h(x)$ has exactly one solution.

\item[ii)] For the region II:
$\pi/2<x<\pi$, $\alpha=\beta=1$.
\begin{equation}
h(x)
=-(\nu_1-\nu_2)\cos(4x)+(\nu_1-\nu_2)-8.
\end{equation}
If $\nu_1-\nu_2=0$, there is no solution.
If $\nu_1-\nu_2\ne 0$ then
\begin{equation}
h(x)=0
\leftrightarrow
\cos(4x)=1-\frac{8}{\nu_1-\nu_2}.
\end{equation}
Therefore, the number of solutions in the region II is
\begin{equation*}
\begin{cases}
0 	& \mbox{for } \nu_1-\nu_2 <4,\\
1	& \mbox{for } \nu_1-\nu_2 =4,\\
2	& \mbox{for } \nu_1-\nu_2 >4.
\end{cases}
\end{equation*}

\item[iii)] For the region III:
$\pi<x<3\pi/2$, $\alpha=-1$, $\beta=1$.
\begin{equation}
h(x)=-2\Big((\nu_1-\nu_2)\sin^2(2x)+4\cos(2x)\Big).
\end{equation}
If $\nu_1-\nu_2=0$ then $\cos(2x)=0$ has a simple zero
at $x=5\pi/4$.
If $\nu_1-\nu_2\ne 0$
then $h(x)=0$ is equivalent to
\begin{equation}
\sin(2x)\tan(2x)=-\frac{4}{\nu_1-\nu_2},
\end{equation}
Since the left hand side takes any value in 
$(-\infty, \infty$
just once in the interval $0<x<\pi/2$,
$h(x)$ has exactly one solution.

\item[iv)] For the region IV:
$3\pi/2<x<2\pi$, $\alpha=\beta=-1$.
\begin{equation}
h(x)=-(\nu_1-\nu_2)\cos(4x)+(\nu_1-\nu_2)+8.
\end{equation}
Therefore, for  $\nu_1-\nu_2>-4$ there is no solution,
for $\nu_1-\nu_2=-4$ there is just one solution,
and for $\nu_1-\nu_2<-4$ there are two solutions.
\end{itemize} 

Summarizing: for $\theta_2-\theta_1=\pi/2$
the number of solutions in each region and the total number of solutions
are shown in Table \ref{NumberOfSolutionsTheta21PiDiv2}.
\begin{table}[htbp]
\caption{Number of solutions for $\theta_2-\theta_1=\pi/2$
in each region and the total.}
\label{NumberOfSolutionsTheta21PiDiv2}
\begin{center}\begin{tabular}
{c|cccc|c}
 & I & II & III & IV & Total \\
 \hline
$\nu_1-\nu_2<-4$ 	& 1 & 0 & 1 & 2 & 4 \\
$\nu_1-\nu_2=-4$ 	& 1 & 0 & 1 & 1 & 3 \\
$-4<\nu_1-\nu_2<4$ 	& 1 & 0 & 1 & 0 & 2 \\
$\nu_1-\nu_2=4$ 	& 1 & 1 & 1 & 0 & 3 \\
$4<\nu_1-\nu_2$ 	& 1 & 2 & 1 & 0 & 4
\end{tabular} 
\end{center}
\end{table}

\subsection{Isosceles rigid rotator}
In this subsection,
isosceles triangles are tackled.
It will be shown that
isosceles rigid rotator always exist for $\nu_1=\nu_2$
with any $\theta_2-\theta_1$.
Moreover, if
$\nu_1\ne\nu_2$, then rigid rotator can exist
for special values of $\theta_2-\theta_1$.

For ``smaller'' isosceles solution,
let be $a=\theta_2-\theta_1$,
and $x=\theta_3-\theta_1=a/2$
in the region I, namely $\alpha=1$, $\beta=-1$.
Then equation (\ref{defOfG}) reduces to 
\begin{equation}
\begin{split}
f=(\nu_1-\nu_2)(4\cos^2(a)+4\cos(a)-1)\sin(a)
=0.
\end{split}
\end{equation}

Therefore,
if $\nu_1=\nu_2$ then
any $a=\theta_2-\theta_1$ has an isosceles solution.
And for the special value of $a$ which satisfies
$4\cos^2(a)+4\cos(a)-1=0$, that is for

\begin{equation}
\cos(a)=\frac{\sqrt{2}-1}{2},
\end{equation}
an  isosceles solution exists for any $\nu_1$, $\nu_2.$
For this solution,
\begin{align}
&\sin(a)=\sin(\theta_2-\theta_1)=\frac{1}{2}\sqrt{1+2\sqrt{2}},\label{isoSinA}\\
&\sin(a/2)=\sin(\theta_2-\theta_3)=\sin(\theta_3-\theta_1)
	=\frac{1}{2}\sqrt{3-\sqrt{2}},\label{isoSinAdiv2}\\
&\cos(2a)=\cos(2(\theta_2-\theta_1))=\frac{1-2\sqrt{2}}{2},\\
&\cos(a)=\cos(2(\theta_2-\theta_3))=\cos(2(\theta_3-\theta_1))
=\frac{\sqrt{2}-1}{2},
\end{align}

\begin{equation}
A=\left(
	m_1^2+m_2^2+m_3^2
	+(2\sqrt{2}-1)
	(m_2m_3+m_3m_1-m_1m_2)
	\right)^{1/2} \ne 0,
\end{equation}
and
\begin{equation}
\frac{F_{12}-F_{23}}{G_{12}-G_{23}}
=\frac{F_{31}-F_{12}}{G_{31}-G_{12}}
=\frac{8}{7}\sqrt{\frac{13+16\sqrt{2}}{7}}.
\end{equation}
Therefore, by equation (\ref{sAndOmega}),
\begin{equation}
\label{isoOmega}
s=1
\mbox{ and }
R^3\omega^2
=2A\,\,\frac{F_{12}-F_{23}}{G_{12}-G_{23}}
=\frac{16A}{7}\sqrt{\frac{13+16\sqrt{2}}{7}}.
\end{equation}
Finally,
\begin{equation}
\begin{split}
\sin(2\theta_1)&=A^{-1}\big(-m_2\sin(2a)-m_3\sin(a)\big),\\
\sin(2\theta_2)&=A^{-1}\big(m_1\sin(2a)+m_3\sin(a)\big),\\
\sin(2\theta_3)&=A^{-1}\sin(a)\,\, (m_1-m_2).
\end{split}
\end{equation}
As seen in the last equation,
the configuration $\theta_k$ depends on the mass ratio.

Two comments for the above solution.
\begin{enumerate}
\item Since both $\sin a$ and $\cos a$ are positive,
$\sin(2a)$ is positive.
Therefore $(G_{12}-G_{23})(G_{31}-G_{12})
=-m_1m_2m_3^2(\nu_1 \sin(2a)+\sin a)(\nu_2 \sin(2a)+\sin a)
\ne 0$.
Namely, the use of the equation (\ref{defOfG}) is valid.
\item The shape of this rigid rotator does not depend of the masses.
We verify this fact by substituting (\ref{isoSinA}--\ref{isoSinAdiv2}) and (\ref{isoOmega}), into equation (\ref{eqThetaForMeridian})
and checking that each line is zero.
Therefore, this rotator does not depend on the masses.
The same happens for the equilateral rigid rotator.
\end{enumerate}

For ``larger'' isosceles solution,
let $a=\theta_2-\theta_1$,
and $x=\theta_3-\theta_1=a/2+\pi$
in the region III namely $\alpha=-1$, $\beta=1$.
Then equation (\ref{defOfG}) reduces to 
\begin{equation}
\label{eqForLargeIsoscele}
f=-(\nu_1-\nu_2)(2\cos(a)+1)^2\sin(a)=0.
\end{equation}
Therefore,
if $\nu_1=\nu_2$ then
any $a=\theta_2-\theta_1$ has an isosceles solution.
And if $a=2\pi/3$,
the  isosceles (actually equilateral) solution
exists
for any $\nu_1$, $\nu_2$.
The existence
of this rotator was already shown in section \ref{equilateralReOnMeridian}.

\section{Number of equilibria on rotating meridian
for given masses and given $\theta_2-\theta_1$ 
for the cotangent potential}
\label{sec8}
As we have shown in the previous sections, for the cotangent potential \eqref{partpot}, 
the number of relative equilibria depends on the
mass ratio and the angle $\theta_2-\theta_1$.
So a natural question arises. 
What is the minimum and maximum number of relative equilibria?

The key equation to give the answer is the equation (\ref{defOfG}).
To avoid the exceptional cases in 
(\ref{exceptionalCaseI}--\ref{exceptionalCaseIII}), we just consider the case
\begin{equation}
\label{theConditionForCountTheNumber}
\nu_k \sin(2a) >1 \quad \mbox{ for }k=1,2.
\end{equation}

 Then the factor 
 $\Big(\nu_1\sin(2a)-\sin(2(x-a))\Big)\Big(\sin(2x)+\nu_2\sin(2a)\Big)$
in $(G_{12}-G_{23})(G_{31}-G_{12})$ cannot be zero.

Although $f$ in (\ref{defOfG}) has denominator $\sin(x)\sin(x-a)=0$,
these zeros are on the boundaries of the regions I, II, III and IV.
A direct calculation shows that $g$ at the boundary is proportional to $\cos(a)\sin^5(a)$.
So, $g$ is not zero at the boundary if $\sin(2 a) \ne 0$. 
Therefore, the zeros of $f$ are zeros of $g$
if the condition (\ref{theConditionForCountTheNumber}) is satisfied.
 
Even though $g$ has step functions $\alpha$ and $\beta$,
 they always appear in the form
 $\alpha\sin^2(x)$ and $\beta\sin^2(x-a)$,
 therefore $g$ is continuous on the boundary of the regions I, II, III and IV.
 
 For the case studied in subsection \ref{secReForThetaEqPiDiv2},
 the minimum number of relative equilibria is two.
 However $a=\theta_2-\theta_1=\pi/2$  is a very special value,
 because $\sin(2a)=0$ for $a=\pi/2$.
 
 To see that there are at least one zero of $g$ in the region I 
 under the condition (\ref{theConditionForCountTheNumber}),
 note that
 \begin{equation}
 g(0)=-2\beta(\mu_1+1)\cos(a)\sin(a)^5,
 \quad g(a)=-2\alpha(\mu_2+1)\cos(a)\sin(a)^5
 \end{equation}
 and $\alpha\beta=-1$ for the region I.
 Therefore, $g(0)g(a)<0$, namely,
 there are at least one zero of $g$ in the region I.
 The same is true for the region III,
 and since $g(\pi)=g(0)$ and $g(\pi+a)=g(a)$, 
we obtain that at least two relative equilibria
 exist if (\ref{theConditionForCountTheNumber})
is satisfied.
 
The Figure \ref{figGforLargeMuPiDiv4and6} shows
an example with two relative equilibria,
$a=\pi/4$ and $\nu_1=3, \nu_2=2$
that satisfies the condition 
(\ref{theConditionForCountTheNumber}).
 
In order to find the maximum number of relative equilibria,
we saw in section \ref{secReForThetaEqPiDiv2}
an example with four relative equilibria.
However, there is an example with
$a=\pi/6$ and $\nu_1=3, \nu_2=2$,
which has six relative equilibria. This will be shown in the following subsection.

\subsection{Rigid rotator with $\theta_2-\theta_1=\pi/6$,\,\, $\nu_1=3,\,\,\nu_2=2$}
Here we show that it is possible to have six rigid rotators 
for $\theta_2-\theta_1=\pi/6$, $\nu_1=3$, $\nu_2=2$.
Since these parameters satisfy the condition (\ref{theConditionForCountTheNumber}),
it is enough to count zeros of the function $g(x)$.

In Figure \ref{figGforLargeMuPiDiv4and6},
we plot the graphic of $g(x)$.
We can see that there are six zeros.
One zero in the region I and III, and two zeros for each region II and IV. 

The existence of at least six zeros is shown in the following.

We have already shown the existence of at least one zero in the region I and III.
\begin{figure}
   \centering
   \includegraphics[width=5.5cm]{./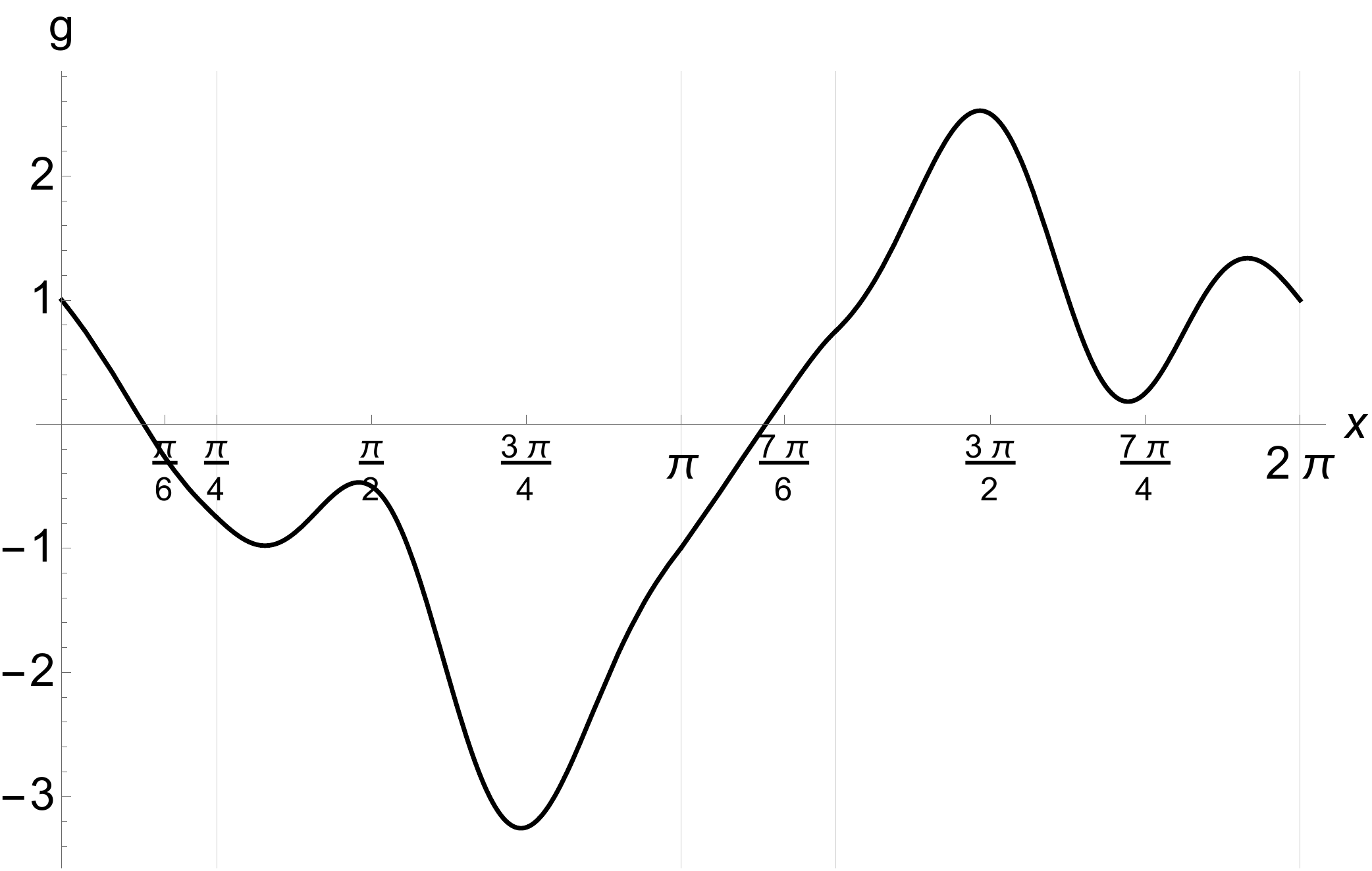}
   \includegraphics[width=6cm]{./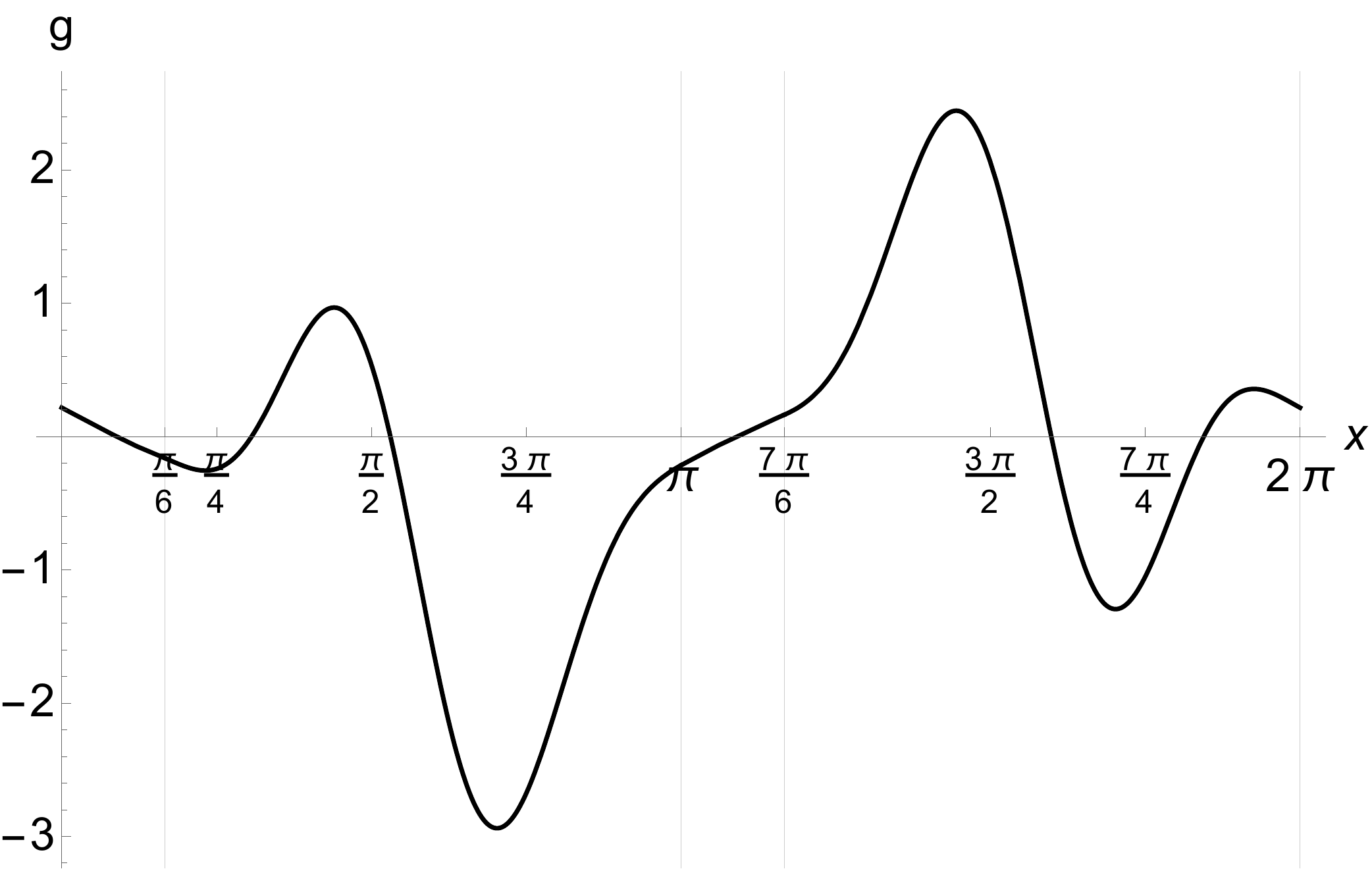} 
   \caption{The function $g$ for $\theta_2-\theta_1=\pi/4$ (left),
   and for $\theta_2-\theta_1=\pi/6$ (right)
   with $\nu_1=3$, $\nu_2=2$.
   The grey vertical lines represent the boundary of the region I, II, III, IV.
   }
   \label{figGforLargeMuPiDiv4and6}
\end{figure}

For the region II,
since
$g(\pi/6)=-3\sqrt{3}/32<0$,
$g(\pi/2)=5\sqrt{3}/16>0$,
$g(\pi)=-\sqrt{3}/8<0$,
there are at least two zeros.

For the region IV,
since
$g(7\pi/6)=3\sqrt{3}/32>0$,
$g(7\pi/4)=-5(5+\sqrt{3})/32<0$,
$g(2\pi)=\sqrt{3}/8>0$,
there are at least two zeros.

With all the above we have proved that
there are at least six rigid rotators 
if the condition (\ref{theConditionForCountTheNumber})
is satisfied.

The six rigid rotators (shapes)
and the corresponding six relative equilibria (configurations)
are shown in Figure \ref{figM321v5DeltaPiDiv6Fig6AllShape}
and Figure \ref{figM321v5DeltaPiDiv6Configurations}
respectively.

We can observe in Figure \ref{figM321v5DeltaPiDiv6Fig6AllShape}
that the position of the mass 3 is almost
symmetrically distributed for the line connecting the middle point of $m_1$ and $m_2$ and the centre of $\mathbb{S}^2$.
The symmetry is perfect if 
$m_1=m_2.$

\begin{figure}   \centering
   \includegraphics[width=4cm]{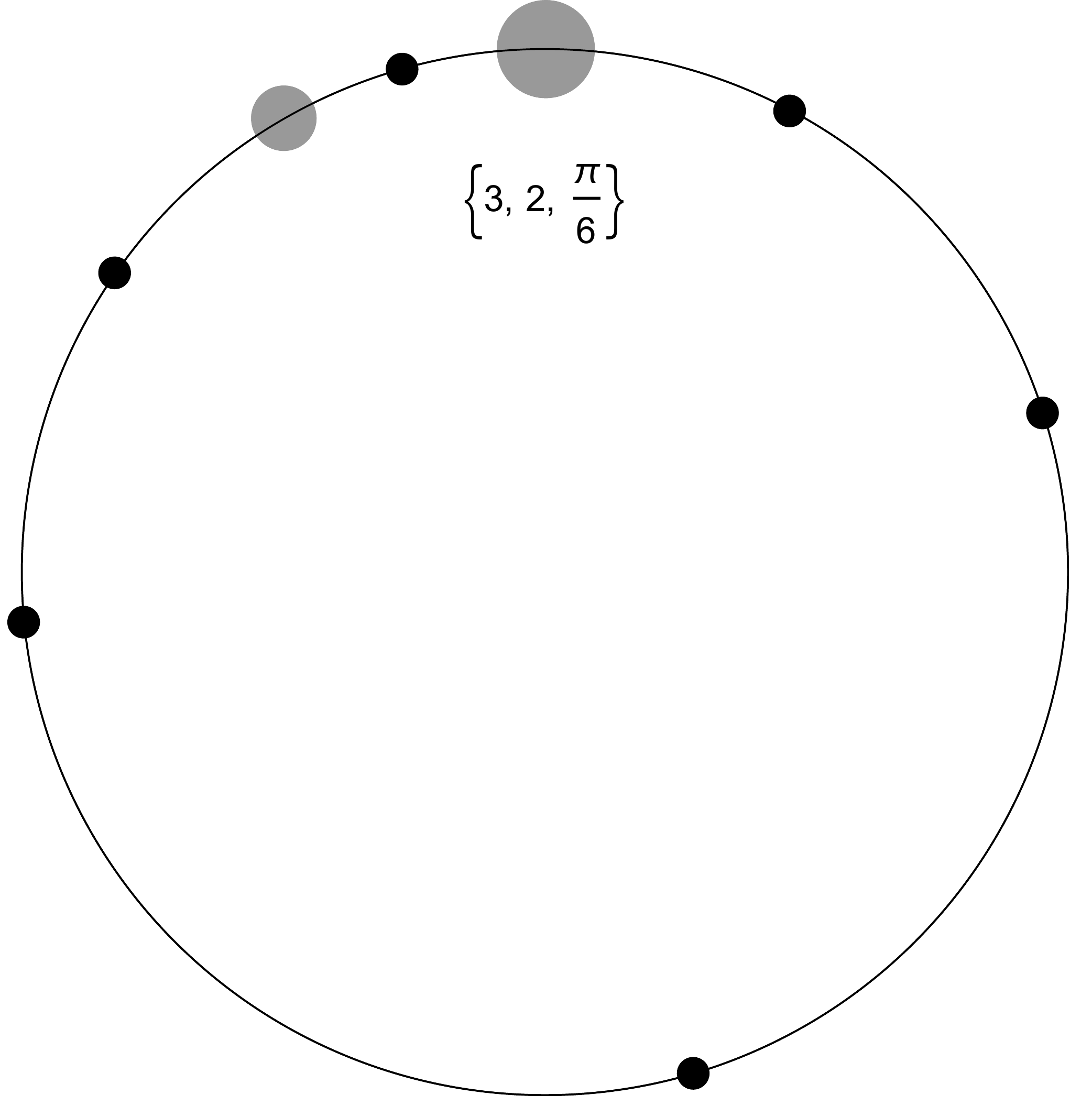} 
   \caption{Six rigid rotator (shape) with
   $\nu_1=3,\nu_2=2$,
   $\theta_2-\theta_1=\pi/6$ in one picture.
   The grey circles represent the bodies 1 (biggest) and 2,
   whose difference is fixed to $\theta_2-\theta_1=\pi/6$.
   The six black circles represent the position of the mass 3
   for six rigid rotator.
   }
   \label{figM321v5DeltaPiDiv6Fig6AllShape}
\end{figure}

\begin{figure}
   \centering
   \includegraphics[width=4cm]{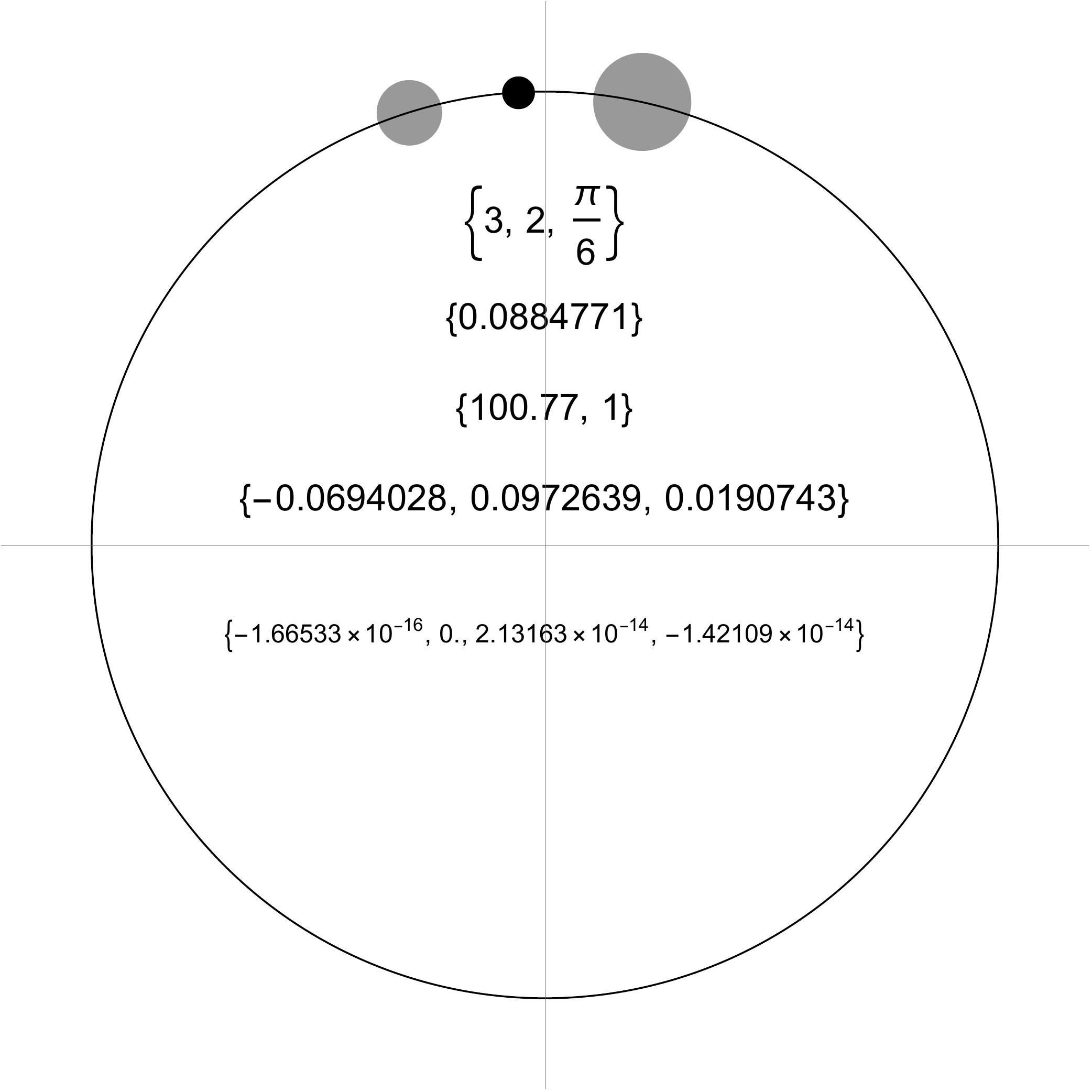}%
   \includegraphics[width=4cm]{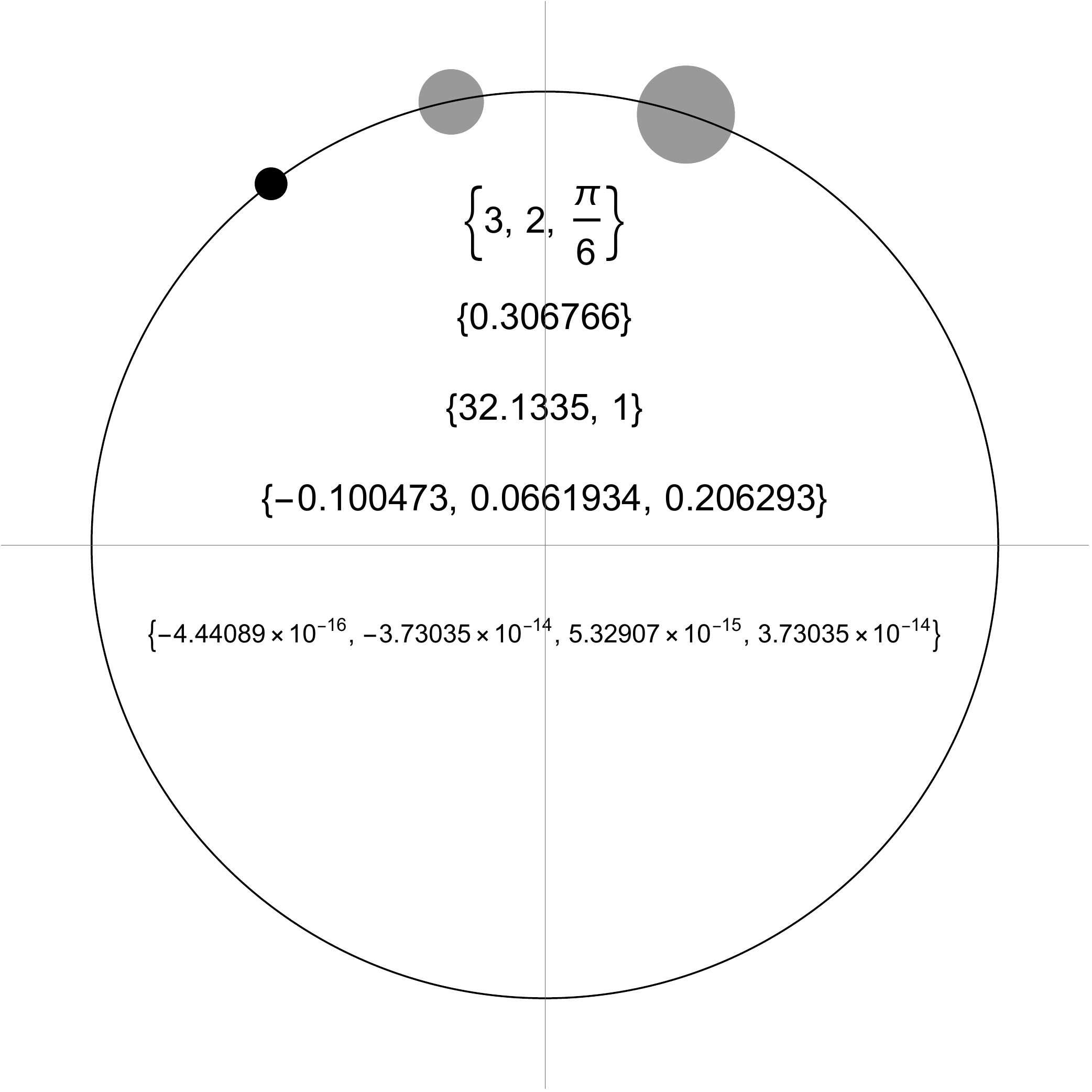}%
   \includegraphics[width=4cm]{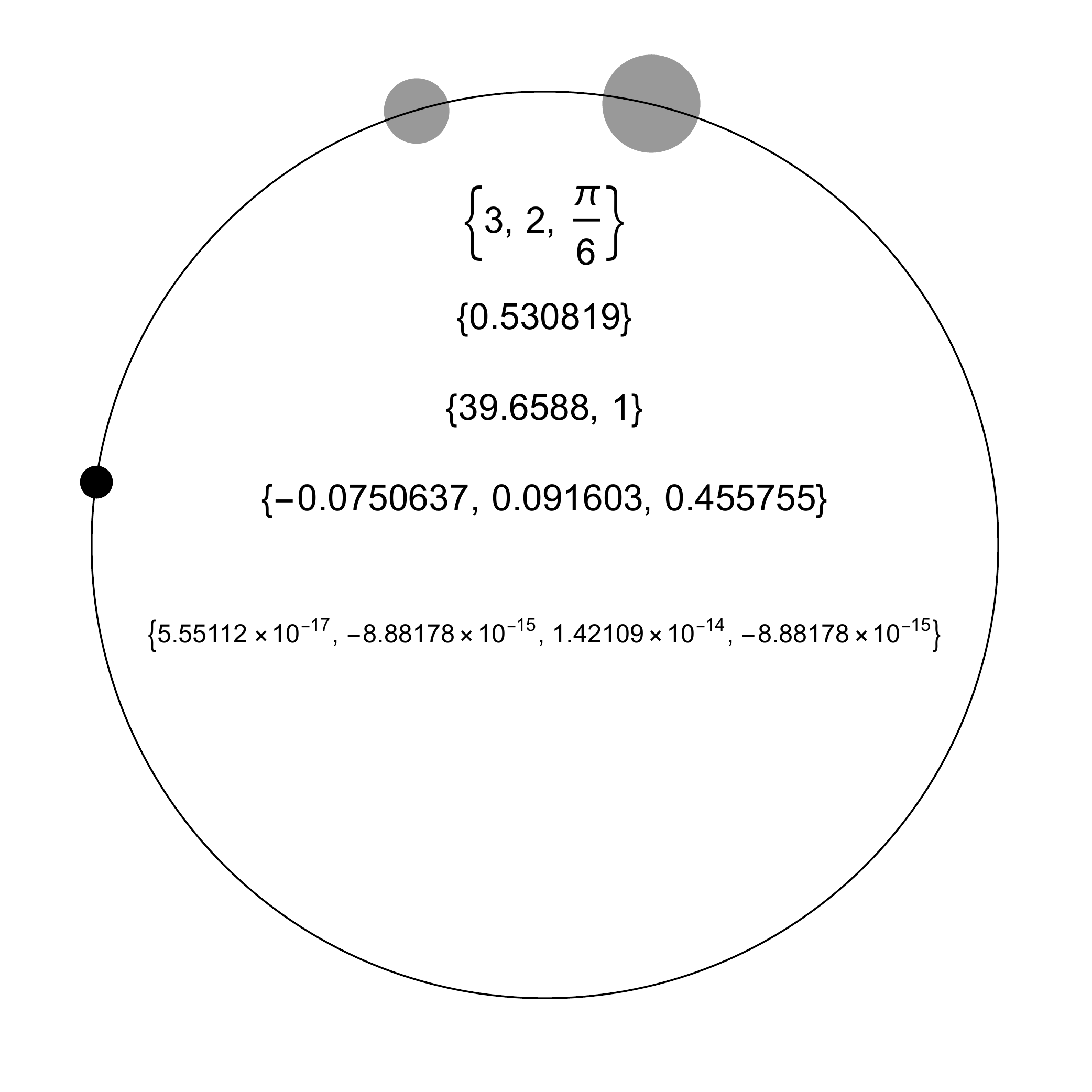}
   \includegraphics[width=4cm]{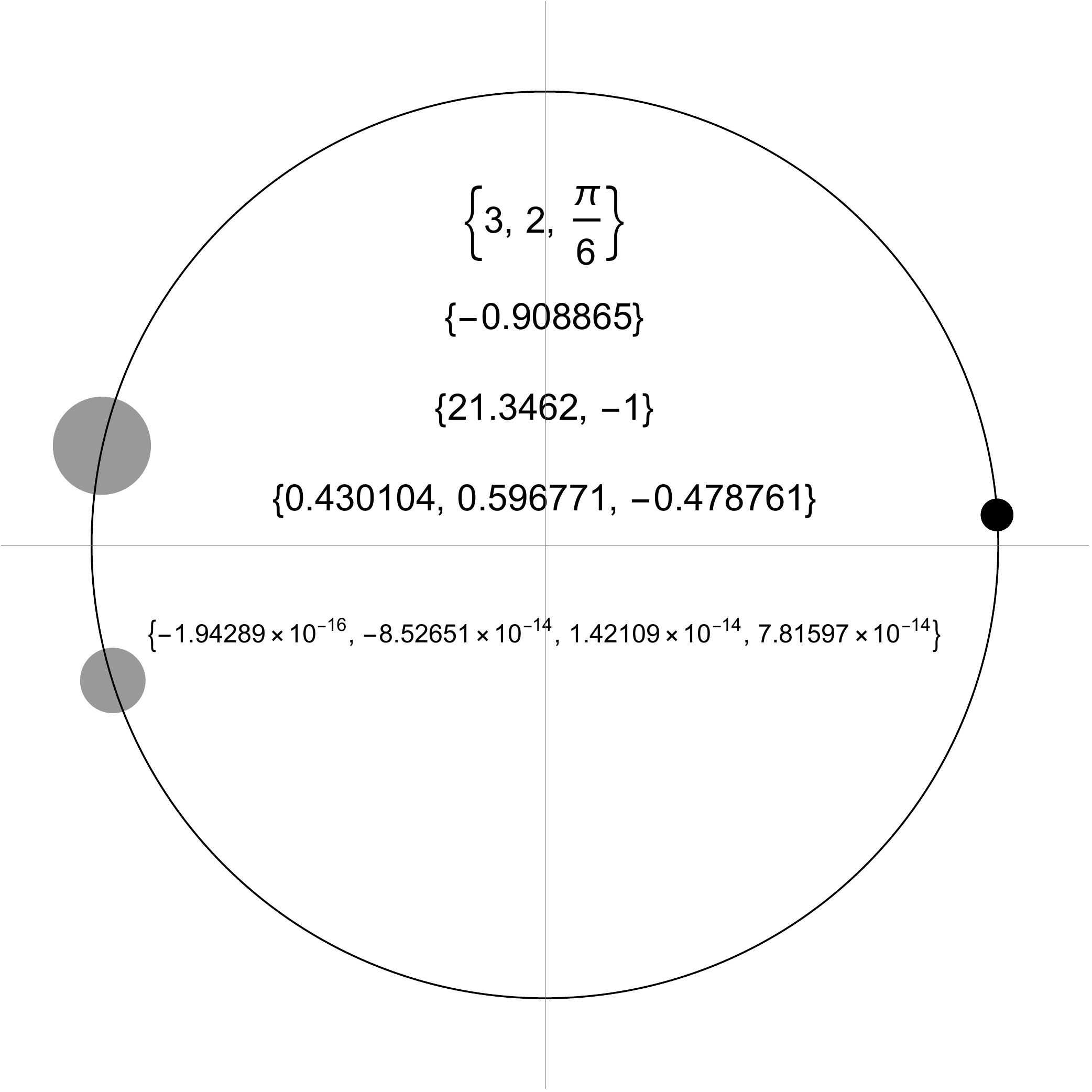}%
   \includegraphics[width=4cm]{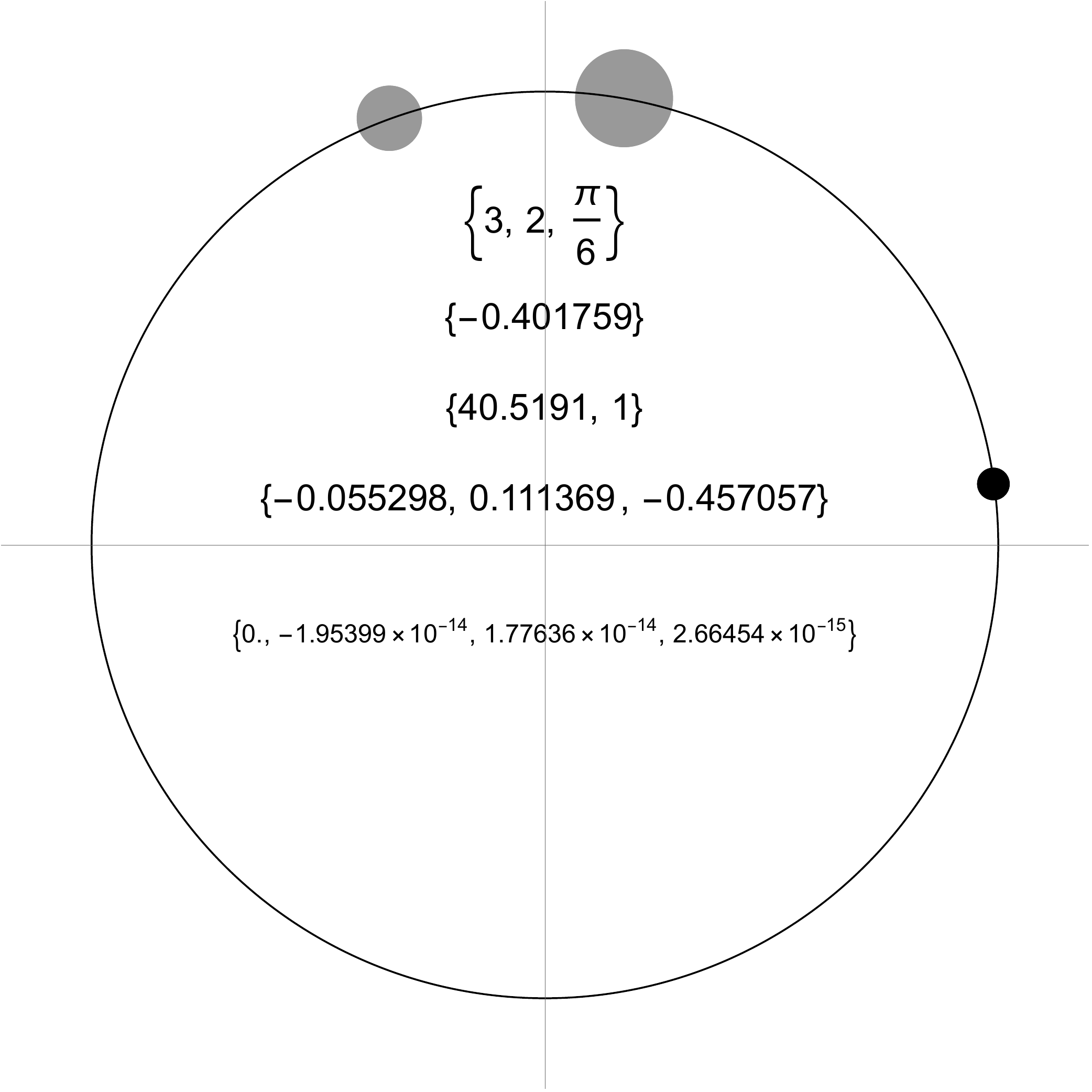}%
   \includegraphics[width=4cm]{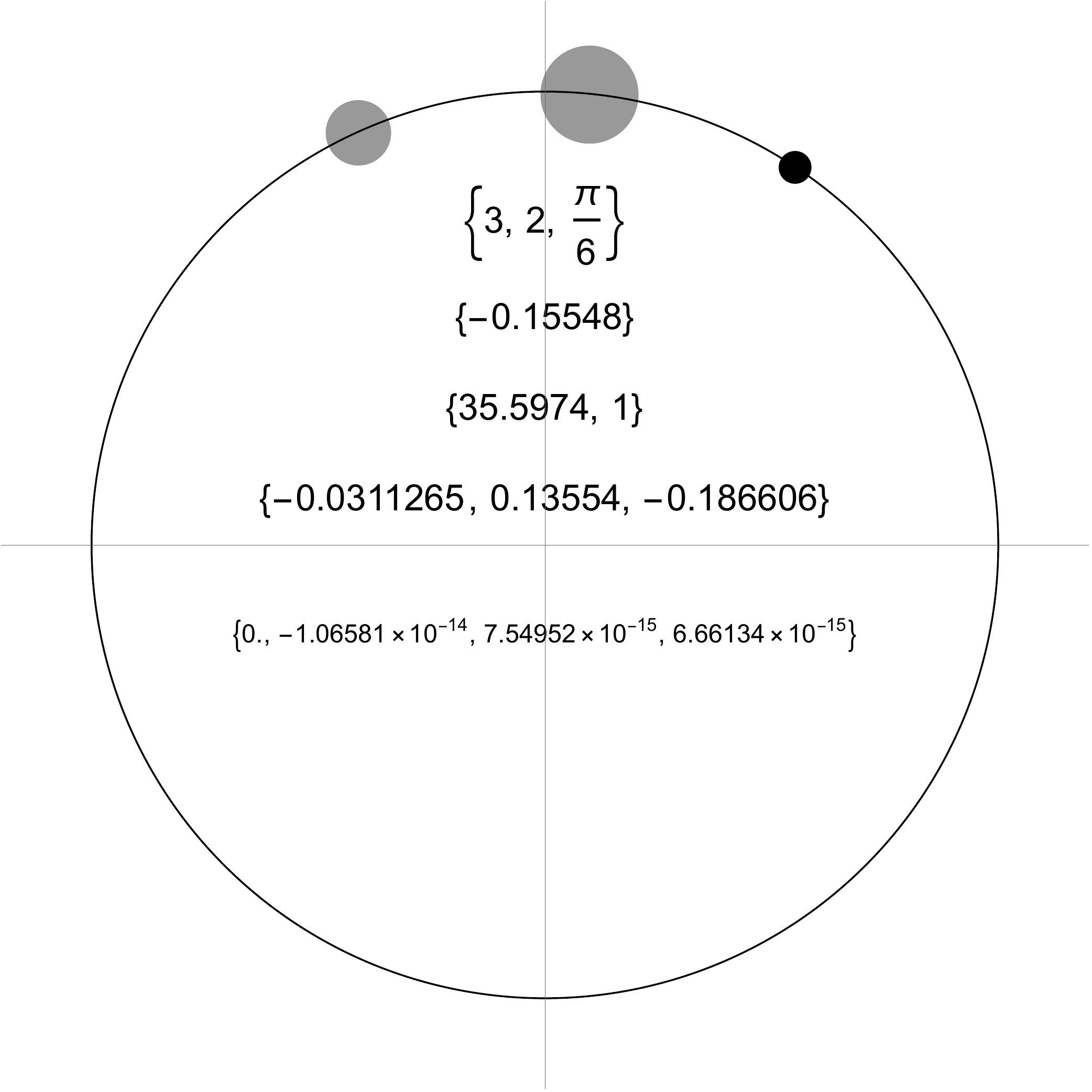}%
   \caption{Six relative equilibria (configurations) with
   $\nu_1=3,\nu_2=2$,
   $\theta_2-\theta_1=\pi/6$.
   The first line in each picture represents 
   $\{\nu_1,\nu_2,\theta_2-\theta_1\}$.
   The second, third, fourth line represents
   $(\theta_3-\theta_1)/\pi$,
   $\{R^3\omega^2,s\}$,
   and $\{\theta_1/\pi, \theta_2/\pi,\theta_3/\pi\}$.
   The fifth line shows
   the numerical difference between the lefthand and righthand side of the original equations of motion (\ref{CxCyForMeridian}) and (\ref{eqThetaForMeridian0}).
   }
   \label{figM321v5DeltaPiDiv6Configurations}
\end{figure}

Based on numerical experiments we state the following conjecture.

\begin{conjecture}
For the curved three body problem on $\mathbb{S}^2$ (with cotangent potential) the maximum number of collinear relative equilibria on a rotating meridian is 6. 
\end{conjecture}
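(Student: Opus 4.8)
The plan is to reduce the conjecture to counting real roots of one-variable polynomials, region by region, and then to control the total. Fix $a=\theta_2-\theta_1\in(0,\pi)$ and the mass ratios $\nu_1,\nu_2$ under the non-degeneracy hypothesis \eqref{theConditionForCountTheNumber}; then the relative equilibria are exactly the zeros of $g$ on the circle $x=\theta_3-\theta_1\in[0,2\pi)$, and inside each of the regions I, II, III, IV the sign functions $\alpha,\beta$ are constant, so there $g$ is a genuine real-analytic trigonometric polynomial. Since every term of $g$ in \eqref{defOfG} depends on $x$ only through $2x$ and $2(x-a)$, the substitution $z=e^{2ix}$ turns each regional $g$ into a Laurent polynomial in $z$ of degrees $-3$ to $3$; clearing the pole by multiplying by $z^{3}$ yields a polynomial $P_{\mathrm{reg}}(z)$ of degree at most six whose unit-circle zeros lying on the arc of the region are the sought relative equilibria. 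First I would record the decomposition $g=\alpha\beta\,Q_{0}+\alpha\,Q_{1}+\beta\,Q_{2}$, where $Q_{0},Q_{1},Q_{2}$ are the three $\pi$--periodic pieces read off from \eqref{defOfG}; this makes explicit that the four regional functions are the combinations $\alpha\beta\,Q_{0}+\alpha\,Q_{1}+\beta\,Q_{2}$ with $(\alpha,\beta)\in\{\pm1\}^{2}$ of one and the same trio.

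The naive degree-six bound per region gives only $24$, so the heart of the proof is to obtain sharp per-region bounds that sum to six. The natural target, consistent with the explicit computation for $a=\pi/2$ in Section \ref{secReForThetaEqPiDiv2} (Table \ref{NumberOfSolutionsTheta21PiDiv2}) and with the six-root example, is at most one zero in each of regions I and III and at most two in each of regions II and IV. To prove these I would pass to the real variable $c=\cos(2x)$, together with $\sin(2x)$ to fix the branch, which expresses each regional equation as a low-degree polynomial relation in $c$, and then apply Descartes' rule of signs or a Sturm sequence to bound the number of admissible roots on the $c$--interval corresponding to the region. The symmetry $x\mapsto x+\pi$, which fixes $Q_{0},Q_{1},Q_{2}$ but flips $\alpha$ and $\beta$, identifies region II with region IV and region I with region III, and should let me treat the two members of each pair by a single computation. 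The lower bound of two, and the localization of one forced root in I and one in III, is already supplied by the intermediate-value argument of Section \ref{sec8} through the boundary values $g(0),g(a),g(\pi),g(\pi+a)$, which are proportional to $\cos(a)\sin^{5}(a)$.

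The hard part is uniformity: the per-region bounds must hold for \emph{every} admissible triple $(\nu_{1},\nu_{2},a)$, not merely for the sampled values, and it is conceivable that the distribution of roots among the regions changes while the total stays at six. A monotonicity argument in the style of the $a=\pi/2$ case need not persist for all $a$, so I expect to need a genuine real-algebraic analysis: view the number of real roots of $P_{\mathrm{reg}}$ as a function on parameter space, locate the discriminant hypersurfaces and the degeneracy walls $\nu_{k}\sin(2a)=1$ of \eqref{theConditionForCountTheNumber} across which roots are created or destroyed (these are exactly the antipodal and exceptional configurations of the appendices), and check that no wall-crossing pushes the total above six. Equivalently one could assemble the four regional conditions into a single auxiliary factor vanishing at every relative equilibrium, for instance $\bigl(Q_{0}^{2}-(Q_{1}+Q_{2})^{2}\bigr)\bigl(Q_{0}^{2}-(Q_{1}-Q_{2})^{2}\bigr)$, and bound its unit-circle roots; but this raises the degree and reintroduces spurious zeros from the ``wrong'' regions, which must then be discarded.

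Finally I would verify that the exceptional cases excluded by \eqref{theConditionForCountTheNumber}, namely $G_{12}=G_{23}$ or $G_{31}=G_{12}$ and the degenerate $A=0$ fixed points, contribute no relative equilibria beyond the six, using the explicit formulas of the appendices; combined with the sharp per-region bounds this would upgrade the conjecture to a theorem. The main obstacle, and the reason the statement is posed as a conjecture rather than proved, is precisely this uniform control over the entire two-parameter family of mass ratios and the angle $a$.
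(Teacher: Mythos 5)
The statement you are addressing is stated in the paper as a \emph{conjecture}, supported only by numerical experiments and the explicit examples of Sections \ref{secReForThetaEqPiDiv2} and \ref{sec8}; the paper offers no proof, and your proposal does not supply one either. Your reduction to counting zeros of $g$ region by region, the passage to $z=e^{2ix}$ giving a degree-six polynomial per region, and the intermediate-value lower bounds at the region boundaries all faithfully reproduce the framework the authors already set up. But the decisive step --- sharp, uniform per-region upper bounds (one zero in each of I and III, two in each of II and IV) valid for \emph{every} admissible $(\nu_1,\nu_2,a)$ --- is exactly what is missing, and you concede this yourself in your final paragraph. A plan that names the obstacle is not a proof that overcomes it; as written, the naive bound is $24$, not $6$.

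Two further concrete problems. First, the symmetry $x\mapsto x+\pi$ does not identify the regions as you claim. Writing the regional functions as $\alpha\beta Q_0+\alpha Q_1+\beta Q_2$, the shift fixes $Q_0,Q_1,Q_2$ and flips $(\alpha,\beta)$, so region I with $(\alpha,\beta)=(1,-1)$ is carried to region III with $(\alpha,\beta)=(-1,1)$, giving $-Q_0-Q_1+Q_2$ there versus $-Q_0+Q_1-Q_2$ on I; these are different functions unless $Q_1=Q_2$. The paper's own Table \ref{NumberOfSolutionsTheta21PiDiv2} confirms the asymmetry: for $\nu_1-\nu_2>4$ region II has two solutions while region IV has none. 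So you cannot ``treat the two members of each pair by a single computation.'' Second, your whole analysis is carried out under the non-degeneracy hypothesis \eqref{theConditionForCountTheNumber}, which excludes an open set of parameters; the conjecture is asserted for all masses and all $\theta_2-\theta_1$, so the exceptional cases of the appendix and the wall $\nu_k\sin(2a)=1$ would have to be handled in full, not merely ``verified to contribute no extra equilibria.'' In short: the approach is the natural one and consistent with the paper, but the statement remains open, and your text should be read as a research program rather than a proof.
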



\section{Conclusions and final remarks}\label{sec9}
We successfully derive the conditions for general rigid rotators, using this technique we obtain some specific rigid rotators and from here we 
find the corresponding relative equilibria
on $\mathbb{S}^2$.

When we have three bodies on the equator moving under the cotangent potential, 
the condition is given by equation (\ref{theSineTheorem}).
As shown clearly in Figure \ref{figElementaryGeometry},
the ratios of $\mu_k=\sqrt{m_i m_j}$ uniquely determine
$\phi_i-\phi_j$.
Since a rigid rotator on the Equator can rotate with any angular velocity $\omega$ including $\omega=0$,
they are also fixed points.
The choice of the $z$--axis is obvious.
Therefore,
the rigid rotator (shape) is directly connected to
the relative equilibrium (configuration). This fact is closely related with the role of the central configurations to find the relative equilibria in the Newtonian $n$--body problem.

In this paper, we did not mention rigid rotators on geodesics that are neither equatorial nor meridian. For the
bodies on such geodesics,
the centrifugal force for each particle
is on the plane of the meridian which passes through the body.
Therefore, the force cannot contribute to get the balance of
the gravitational forces between bodies, so
gravitational forces must balance by themselves,
and the angular velocity must be zero.
Therefore, the configuration must be a fixed point,
which is the same as for the fixed point on the Equator.

For the bodies on a rotating meridian,
the conditions for a rigid rotator are given by equations
(\ref{eqThetaForMeridian})
or one of (\ref{sAndOmega}--\ref{exceptionalCaseIII})
for generic potential, 
or (\ref{defOfG}) with exceptional cases in the appendix
\ref{exceptionalCases}
for the cotangent potential.

The equilateral  rigid rotators 
on a rotating meridian for arbitrary mass ratio
in a generic potential
exists, which is clearly shown by equation (\ref{eqThetaForMeridian}).
The $z$--axis is the rotation axis, and the configuration $\theta_k$
is given 
by equation (\ref{cofigForEquilateralTriangle}).
From here, we get that the relative equilibria (configuration of $\theta_k$) depend on the mass ratio.
On the other hand, the shape (rigid rotator) is
just one, equilateral triangle on the meridian.

The equation (\ref{defOfG}), especially $g=0$
for cotangent potential
has exactly the same limit for $R\to \infty$
as the equation for the 
Eulerian relative equilibria on the Euclidean plane,
as it should be. 
The equation (\ref{defOfG}) is useful
to find rigid rotators on meridians for given mass ratio and given
$\theta_2-\theta_1$.
In the paper we have found many rigid rotators, in particular we have showed that
isosceles rigid rotators exist.
For $m_1=m_2$ with any difference $\theta_2-\theta_1$, we found two isosceles rigid rotators where the mass $m_3$ is at the middle point of the minor or major arc connecting $m_1$ and $m_2$. 

For $m_1\ne m_2$, 
in addition to the equilateral triangle already mentioned,
there is one isosceles triangle
with $\cos(\theta_2-\theta_1)=(\sqrt{2}-1)/2$, where the mass $m_3$ is located at the middle point of the minor arc connecting $m_1$ and $m_2$,
that is 
 $\theta_2-\theta_3=\theta_3-\theta_1=(\theta_2-\theta_1)/2$.
This rigid rotator (the shape) is also masses independent.

For the cotangent potential \eqref{partpot} we found an example where the minimum number of rigid rotators on rotating meridian
is two and the maximum number is six.
We expect that this should be true in general
for cotangent potential.
However, it is still an open question.
It needs further investigations.

Finally, we must emphasises the importance of the condition $c_x=c_y=0$ in (\ref{CxCy3}).
Comparing with the Newtonian $n$-body problem on the
Euclidean plane, we have lost two important first integrals corresponding to
the centre of mass.
Fixing the centre of mass at the origin
we get clear representations for relative equilibria. 
For a given shape, given mutual distances $r_{ij}$, the position of the centre of mass is clear. For $RE$ on $\mathbb{S}^2$, 
we lost the above two integrals,
but we get two first integrals $c_x$ and $c_y$ instead.
As we have shown, these
two integrals determine the rotation axis $z$.
Indeed,
for the limit $R\to \infty$ with $r_k=R\theta_k$ finite,
$c_x=c_y=0$ takes the form
\begin{equation}
\sum_k m_k r_k \cos\phi_k +O(1/R^2)
= \sum_k m_k r_k \sin\phi_k +O(1/R^2)
=0.
\end{equation}
Which is just the centre of mass condition
for the polar coordinate of the Euclidean plane.
Thus, $c_x=c_y=0$ is the extension of the center of mass condition
on the plane to rotation axis condition on $\mathbb{S}^2$.

From the condition $c_x=c_y=0$,
we find the translation formula 
between the shape variables $\theta_i-\theta_j$
and the configuration variables $\theta_k$,
given by the equations (\ref{sin2ThetaByDeltaTheta1}--\ref{cos2ThetaByDeltaTheta1}).
They are the key equations for the condition
of the rigid rotator on a meridian.
Since the conditions $c_x=c_y=0$ are just 
based on the $SO(3)$ invariance and the definition of the
relative equilibria,
the formula (\ref{sin2ThetaByDeltaTheta1}--\ref{cos2ThetaByDeltaTheta1})
and the method to find the rigid rotator and relative equilibria
 can be applicable to a wide class of potentials,
including attractive forces, repulsive forces,
and for charged particles.

\appendix
\section{Direct derivation of the equation (\ref{theCosineTheorem})
from (\ref{theSineTheorem})} \label{directCosineTheorem}

Let $a=\phi_1-\phi_2$ and $b=\phi_2-\phi_3$.
Then, $\sin(\phi_3-\phi_1)=-\sin(a+b)$.
By the equation (\ref{theSineTheorem}),
\begin{equation*}
\begin{split}
-\rho\mu_2
&=\sin(a+b)\\
&=\sin(a)\cos(b)+\cos(a)\sin(b)\\
&=\rho\mu_3\cos(b)+\rho\mu_1\cos(a).
\end{split}
\end{equation*}
Therefore,
\begin{equation*}
\begin{split}
\mu_3^2\cos^2(b)
&=\mu_2^2+2\mu_1\mu_2\cos(a)+\mu_1^2\cos^2(a).\\
\Leftrightarrow
\mu_3^2(1-\rho^2\mu_1^2)
&=\mu_2^2+2\mu_1\mu_2\cos(a)
	+\mu_1^2(1-\rho^2\mu_3^2).\\
\Leftrightarrow
\mu_3^2
&=\mu_2^2+2\mu_1\mu_2\cos(a)+\mu_1^2.
\end{split}
\end{equation*}
Finally, we obtain
\begin{equation*}
\cos(\phi_1-\phi_2)
=\frac{\mu_3^2-(\mu_1^2+\mu_2^2)}{2\mu_1\mu_2}.
\end{equation*}
Similarly, we obtain the equations in (\ref{theSineTheorem}).

\section{Exceptional cases for rigid rotator
on a rotating meridian} \label{exceptionalCases}

\subsection*{The case 2.
$G_{12}=G_{23}$ and $F_{12}=F_{23}$}
In this subsection,
the exceptional case 2,
$G_{12}=G_{23}$ and $F_{12}=F_{23}$
surely occur for special angles
$\theta_1-\theta_2$ and $\theta_2-\theta_3$
for given $\nu_1=m_1/m_3$.

The condition $G_{12}=G_{23}$ and $F_{12}=F_{23}$
are
\begin{equation}
\begin{split}
m_1m_2\sin(2(\theta_1-\theta_2))&=m_2m_3\sin(2(\theta_2-\theta_3)),\\
\frac{m_2m_3\sin(\theta_1-\theta_2)}{|\sin(\theta_1-\theta_2|^3}&=\frac{m_2m_3\sin(\theta_2-\theta_3)}
{|\sin(\theta_2-\theta_3)|^3}.
\end{split}
\end{equation}

In the following, let us write
$\theta_{12}=\theta_1-\theta_2$,
$\theta_{23}=\theta_2-\theta_3$,
and $\nu_1=m_1/m_3$.
We can assume $\sin\theta_{12}>0$ without loss of generality.
Then, by the second equation $\sin\theta_{23}>0$,
the equations are
\begin{equation}
\begin{split}
\nu_1\sin\theta_{12}\cos\theta_{12}&=\sin\theta_{23}\cos\theta_{23},\\
\nu_1^{-1/2}\sin\theta_{12}&=\sin\theta_{23}.
\end{split}
\end{equation}
Divide the first line by the second line to get
\begin{equation}
\nu_1^{3/2}\cos\theta_{12}=\cos\theta_{23}.
\end{equation}

The solutions are
\begin{equation}
\label{exceptionalTheta12}
\sin\theta_{12}
=\sqrt{\frac{\nu_1(1+\nu_1+\nu_1^2)}{(1+\nu_1)(1+\nu_1^2)}
	}\,\,,
\cos\theta_{12}
=\pm\frac{1}{\sqrt{(1+\nu_1)(1+\nu_1^2)}}\,\,,
\end{equation}
\begin{equation}
\label{exceptionalTheta23}
\sin\theta_{23}
=\sqrt{\frac{1+\nu_1+\nu_1^2}{(1+\nu_1)(1+\nu_1^2)}
	}\,\,,
\cos\theta_{23}
=\pm\sqrt{\frac{\nu_1^3}{(1+\nu_1)(1+\nu_1^2)}}\,\,.
\end{equation}

Note that the absolute value of the right hand side of each equation is always smaller than $1$ for any mass ratio $\nu_1=m_1/m_3$.
Therefore, the solutions  exist for any $\nu_1$.

\subsection*{The case 3.
$G_{12}=G_{31}$ and $F_{12}=F_{31}$}
Replace
$\theta_{23}\to \theta_{31}=\theta_3-\theta_1$ and 
$\nu_1\to \nu_2=m_2/m_3$ in the 
equations (\ref{exceptionalTheta12}) and (\ref{exceptionalTheta23})
to get the relations between $\nu_2=m_2/m_3$
and $\theta_{12}$, $\theta_{31}$.
Namely,
\begin{equation}
\label{exceptionalTheta12ForCase3}
\sin\theta_{12}
=\sqrt{\frac{\nu_2(1+\nu_2+\nu_2^2)}{(1+\nu_2)(1+\nu_2^2)}
	}\,\,,
\cos\theta_{12}
=\pm\frac{1}{\sqrt{(1+\nu_2)(1+\nu_2^2)}}\,\,,
\end{equation}
\begin{equation}
\label{exceptionalTheta31}
\sin\theta_{31}
=\sqrt{\frac{1+\nu_2+\nu_2^2}{(1+\nu_2)(1+\nu_2^2)}
	}\,\,,
\cos\theta_{31}
=\pm\sqrt{\frac{\nu_2^3}{(1+\nu_2)(1+\nu_2^2)}}\,\,.
\end{equation}

\subsection*{The case 4. $G_{12}=G_{23}=G_{31}$
and $F_{12}=F_{23}=F_{31}$}
In this subsection, it is shown that
the case 4, $G_{12}=G_{23}=G_{31}$ and
$F_{12}=F_{23}=F_{31}$ is satisfied
only when $m_1=m_2=m_3$
and $\theta_1-\theta_2=\theta_2-\theta_3=\theta_3-\theta_1=2\pi/3$.

The condition of $F_{12}=F_{23}=F_{31}$ is already solved
in the section \ref{sec4}.
The solution is
\begin{equation}
\label{cosAndSinForException1}
\begin{split}
\cos(\theta_i-\theta_j)&=\frac{\mu_k^2-(\mu_i^2+\mu_j^2)}{2\mu_i\mu_j},\\
\sin(\theta_i-\theta_j)&=\rho \mu_k.
\end{split}
\end{equation}
The definition of $\mu_k$ is the same as in the section \ref{sec4} and $\rho$ is given by (\ref{solForK}).

Substituting this solution into the condition for $G_{ij}$,
we obtain
\begin{equation}
a_k=\mu_k^4\Big(\mu_k^2-(\mu_i^2+\mu_j^2)\Big)
\mbox{ are common}.
\end{equation}
Then,
\begin{equation}
\begin{split}
0&=a_1-a_2
=(\mu_1^2-\mu_2^2)
	\Big(\mu_1^4+\mu_2^4-\mu_3^2(\mu_1^2+\mu_2^2)\Big),\\
0&=a_2-a_3
=(\mu_2^2-\mu_3^2)
	\Big(\mu_2^4+\mu_3^4-\mu_1^2(\mu_2^2+\mu_3^2)\Big).
\end{split}
\end{equation}

From the first equation, we obtain
$\mu_1=\mu_2$ or 
$\mu_1^4+\mu_2^4-\mu_3^2(\mu_1^2+\mu_2^2)=0$.
If $\mu_1=\mu_2$, putting this into the second equation
yields $\mu_1=\mu_2=\mu_3$.

Similarly, if $\mu_2=\mu_3$,
the second equation is satisfied,
and the first equation yields $\mu_1=\mu_2=\mu_3$.

So, the remaining possibility to be considered are the case
\begin{equation}
\begin{split}
\mu_1^4+\mu_2^4-\mu_3^2(\mu_1^2+\mu_2^2)&=0,\\
\mu_2^4+\mu_3^4-\mu_1^2(\mu_2^2+\mu_3^2)&=0.
\end{split}
\end{equation}
Subtracting the second equation from the first,
we obtain
$(\mu_1^2-\mu_3^2)(\mu_1^2+\mu_2^2+\mu_3^2)=0$.
Therefore, $\mu_1=\mu_3$.
Substituting $\mu_1=\mu_3$ into the first equation,
we again obtain $\mu_1=\mu_2=\mu_3$.

Then, by the equation (\ref{cosAndSinForException1}),
$\cos(\theta_i-\theta_j)=-1/2$.
Namely,
$\theta_1-\theta_2
=\theta_2-\theta_3
=\theta_3-\theta_1
=2\pi/3$.

\subsection*{Acknowledgements}
The second author (EPC) has been partially supported 
by Asociaci\'on Mexicana de Cultura A.C. and Conacyt-M\'exico Project A1S10112.


\begin{thebibliography}{99}

\bibitem{JA} Andrade J., D\'avila N., Perez-Chavela E. and Vidal C.
\emph{Dynamics and regularization of the Kepler problem on surfaces of constant curvature}, Can. J. Math.,  {\bf 69}-5, (2017), 961-991. 

\bibitem{Bengochea} Bengochea A., Garc\'ia-Azpeitia C., P\'erez-Chavela E.,  Roldan P. {\it Continuation of relative equilibria in the $n$--body problem to spaces of constant curvature}  
Journal of Differential Equations, \textbf{307}, (2022), 137-159.

\bibitem{Borisov1} Borisov A.V., Mamaev I.S., Bizyaev I.A. \emph{The Spatial Problem of $2$ Bodies on a Sphere, Reduction and Stochasticity }, Regular and Chaotic Dynamics {\bf 216}-5, (2016), 556-580.

\bibitem{Borisov2} Borisov A. V.,  Mamaev I. S., Kilin  A. A., {\it Two-body problem on a sphere: reduction, stochasticity, periodic orbits}; Institute of Computer Science, Udmurt State University, (2005).

\bibitem{Borisov3} Borisov A. V., Mamaev I. S., {\it The restricted two-body problem in constant curvature
spaces}, Celestial Mech. Dynam. Astronom \textbf{96}, 1-17, (2006).

\bibitem{Carinena} Cari\~{n}ena J. F., Ra\~{n}ada M. F. y Santander M. {\it Central potentials on spaces of constant curvature: the Kepler problem on the two-dimensional sphere 
$S^2$ 
and the hyperbolic plane $H^2$.} J. Math. Phys. \textbf{46} (2005), no. 5, 052702, 25 pp.

\bibitem{Diacu-EPC1} Diacu F., P\'erez-Chavela E., Santoprete M., {\it The n-body problem in spaces of constant curvature. Part I: Relative equilibria.} J. Nonlinear Sci. \textbf{22} (2012), no. 2, 247--266.

\bibitem{Diacu-EPC2} Diacu F., P\'erez-Chavela E., Santoprete M.,, {\it The n-body problem in spaces of constant curvature. Part II: Singularities.} J. Nonlinear Sci. \textbf{22} (2012), no. 2, 267--275.

\bibitem{Diacu1} Diacu F., Relative equilibria of the curved N-body problem. Atlantis Studies in Dynamical Systems, Atlantis Press, Amsterdan, Paris, Beijing \textbf{1}, 2012.

\bibitem{Diacu2} Diacu F.,
\emph{Polygonal Homographic  Orbits of the Curved $n-$Body Problem}, Trans. Amer. Math. Soc.
 {\bf 364}-5, (2012), 2783-2802.

\bibitem{Diacu3} Diacu F.and P\'erez-Chavela E.,
\emph{Homographic solutions of the curved 3-body problem}, Journal of Differential Equations {\bf 250},  (2011), 340-366.

\bibitem{Diacu4} Diacu F., S\'anchez-Cerritos J.M. and Zhu S.
\emph{Stability of Fixed Points and Associated Relative Equilibria of the 3-body Problem on $S^1$ and $S^2$}, Journal of Dynamics and Differential Equations, {\bf 30}, (2018), 209-225.

\bibitem{Euler} Euler L. De mutuo rectilineo trium corporum se mutuo attrahentium, Novi Comm. Acad. Sci. Imp. Petrop. 11 (1767) 144-151.

\bibitem{Hestenes}David Hestenes,
{\it New foundation for classical mechanics},
Kluwer Academic Publishers,
Second edition, 2003

\bibitem{Kozlov} Kozlov V. V.,  Harin A. O.,  {\it Kepler's problem in constant curvature spaces.} Cel. Mech. Dynam. Astronom. \textbf{54} (1992), no. 4, 393--399.
 
\bibitem{M-S} Mart\'inez R. Sim\'o C.,  {\it On the stability of the Lagrangian homographic solutions in a  curved three body problem on $\mathbb{S}^2$.} Discrete Cont. Dyn. Syst. Ser. A. {\bf 33} (2013), 1157--1175. 

\bibitem{Moeckel}
Moeckel R., Notes on Celestial Mechanics (especially central
configurations). http://www.math.umn.edu/\~rmoeckel/notes/Notes.html

\bibitem{EPC1} P\'erez-Chavela E. and Reyes-Victoria J.G.,
 \emph{An intrinsec approach in the curved $n$-body problem. The positive curvature case}, Trans. Amer. Math. Soc. {\bf 364}-7, (2012),  3805-3827.
 
 \bibitem{EPC2} P\'erez-Chavela E. and S\'anchez-Cerritos J.M. \emph{Euler-type relative equilibria in spaces of constant curvature and their stability}, Canad. J.  Math. {\bf 70}-2, (2018), 426-450.

\bibitem{Shchepetilov2} Shchepetilov A.V., {\it Nonintegrability of the two-body problem in constant curvature spaces.}  J. Phys. A \textbf{39} (2006), no. 20, 5787--5806. 

\bibitem{tibboel} Tibboel P.,  {\it Polygonal homographic orbits in spaces of constant curvature.} Proc. Amer. Math. Soc.  \textbf{141} (2013), 1465-1471.

\bibitem{Vozmischeva2} Vozmischeva T. G., {\it Integrable problems of celestial mechanics in spaces of constant curvature}, Kluwer Acad. Publ., Dordrecht, (2003).

\bibitem{Wintner} Wintner A., {\it The
Analytical Foundations Celestial of Mechanics}, Princeton
University Press, Princeton, New York, (1941).

\bibitem{zhu} Zhu S., {\it Eulerian relative equilibria of the curved 3-body problem.} Proc. Amer. Math. Soc. \textbf{142} (2014), 2837-2848.

\end{thebibliography}
\end{document}